\theoremstyle{plain}
\newtheorem {lemma}{Lemma}
\newtheorem {replacement lemma}[lemma]{Replacement Lemma}
\newtheorem {theorem}[lemma]{Theorem}
\newtheorem {FD theorem}[lemma]{Finite Dimension Theorem}
\newtheorem {corollary}[lemma]{Corollary}
\theoremstyle{definition}
\newtheorem{definition}[lemma]{Definition}
\newtheorem{remark}[lemma]{Remark}
\newtheorem {example}[lemma]{Example}
\newcommand{\R}{\mathbb{R}}
\newcommand{\N}{\mathbb{N}}
\newcommand{\X}{\langle X \rangle}
\def\Oo{\mathcal{O}}
\newcommand{\GKdim}{\operatorname{GKdim}}
\newcommand{\nod}{\operatorname{nod}}
\newcommand{\reg}{\operatorname{reg}}
\newcommand{\RWF}{\operatorname{RWF}}
\newcommand{\CP}{\operatorname{CP}}
\newcommand{\st}{\operatorname{st}}
\title[Weighted Leavitt path algebras of finite GK dimension]{Weighted Leavitt path algebras of finite Gelfand-Kirillov dimension}
\author{Raimund Preusser}
\address{Department of Mathematics,
University of Brasilia, Brazil}
\email{raimund.preusser@gmx.de}
\date{}
\subjclass[2000]{16S10, 16W10, 16W50, 16D70} 
\keywords{Weighted Leavitt path algebra, Gelfand-Kirillov dimension}
\begin{document}

\begin{abstract} We determine the Gelfand-Kirillov dimension of a weighted Leavitt path algebra $L_K(E,w)$ where $K$ is a field and $(E,w)$ a finite weighted graph. Further we show that a finite-dimensional weighted Leavitt path algebra over a field $K$ is isomorphic to a finite product of matrix rings over $K$. 
\end{abstract}

\maketitle

\section{Introduction}
In a series of papers William Leavitt studied algebras that are now denoted by $L_K(n,n+k)$ and have been coined Leavitt algebras. Let $X=(x_{ij})$ and $Y=(y_{ji})$ be $(n+k)\times n$ and $n\times (n+k)$ matrices consisting of symbols $x_{ij}$ and 
$y_{ji}$, respectively. Then for a field $K$, $L_K(n,n+k)$ is a $K$-algebra generated by all $x_{ij}$ and $y_{ji}$ subject to the relations $XY=I_{n+k}$ and $YX=I_n$. In~\cite[p.190]{vitt56} Leavitt studied these algebras for  $n=2$ and $k=1$, in~\cite[p.322]{vitt57} for any $n\geq 2 $ and $k=1$ and finally in~\cite[p.130]{vitt62} for arbitrary $n$ and $k$. 
 
 Leavitt path algebras (Lpas) were introduced a decade ago~\cite{aap05,Ara_Moreno_Pardo}, associating a $K$-algebra to a directed graph. For a graph with one vertex and $k+1$ loops, it recovers the Leavitt algebra $L_K(1,k+1)$.  The definition and the development of the theory were  inspired on the one hand by Leavitt's construction of  $L_K(1,k+1)$ and on the other hand by Cuntz algebras $\Oo_n$~\cite{cuntz1} and  Cuntz-Krieger algebras in $C^*$-algebra theory~\cite{raeburn}.  The Cuntz algebras and later Cuntz-Krieger type $C^*$-algebras revolutionised $C^*$-theory, leading ultimately to the astounding
Kirchberg-Phillips classification theorem~\cite{phillips}. In the last decade the Lpas have created the same type of stir in the algebraic community. The development of Lpas and its interaction with graph $C^*$-algebras have been well-documented in several publications and we refer the reader to~\cite{abrams-ara-molina} and the references therein. 
 
 Since their introductions, there have been several attempts to introduce a generalisation of Lpas which would cover the algebras $L_K(n,n+k)$ for any $n\geq 1$, as well. Ara and Goodearl's Leavitt path algebras of separated graphs were introduced in~\cite{aragoodearl} which gives $L_K(n,n+k)$ as a corner ring of some separated graphs. The weighted Leavitt path algebras (wLpas) were introduced by R. Hazrat in \cite{hazrat13}, which gives $L_K(n,n+k)$ for a weighted graph with one vertex and $n+k$ loops of weight $n$. If the weights of all the edges are $1$ (i.e., the graph is unweighted), then the wLpas reduce to the usual Lpas. 

In \cite{zel12} linear bases for Lpas were obtained and used to determine the Gelfand-Kirillov dimension of an Leavitt path algebra $L_K(E)$ where $K$ is a field and $E$ a finite directed graph. In \cite{hazrat-preusser} linear bases for wLpas were obtained, generalising the basis result for Lpas given in \cite{zel12}. These bases were used to classify the wLpas which are domains, simple and graded simple rings. 
In this note we use them to determine the Gelfand-Kirillov dimension of a weighted Leavitt path algebra $L_K(E,w)$ where $K$ is a field and $(E,w)$ a finite weighted graph. Further we show that a finite-dimensional weighted Leavitt path algebra over a field $K$ is isomorphic to a finite product of matrix rings over $K$. 

The rest of the paper is organised as follows. In Section 2 we recall some basic facts about wLpas. In Section 3 we prove our first main result, Theorem \ref{thmm}, which gives the Gelfand-Kirillov dimension of a weighted Leavitt path algebra $L_K(E,w)$ where $K$ is a field and $(E,w)$ a finite weighted graph. In Section 4 we compute the Gelfand-Kirillov dimension of some concrete examples of wLpas. In Section 5 we prove our second main result, the Finite Dimension Theorem \ref{thmm2}.
 
\section{Weighted Leavitt path algebras}

Throughout this section $R$ denotes an associative, unital ring.

\begin{definition}[\sc Directed graph]\label{defdg}
A {\it directed graph} is a quadruple $E=(E^0,E^1,s,r)$ where $E^0$ and $E^1$ are sets and $s,r:E^1\rightarrow E^0$ maps. The elements of $E^0$ are called {\it vertices} and the elements of $E^1$ {\it edges}. If $e$ is an edge, then $s(e)$ is called its {\it source} and $r(e)$ its {\it range}. $E$ is called {\it row-finite} if $s^{-1}(v)$ is a finite set for any vertex $v$ and {\it finite} if $E^0$ and $E^1$ are finite sets.
\end{definition}

\begin{definition}[{\sc Double graph of a directed graph}]\label{defddg}
Let $E$ be a directed graph. The directed graph $E_d=(E_d^0, E_d^1, s_d, r_d)$ where $E_d^0=E^0$, $ E_d^1=E^1\cup (E^1)^*$ where $(E^1)^*=\{e^*\mid e\in E^1\}$,
\[s_d(e)=s(e),~r_d(e)=r(e),~s_d(e^*)=r(e) \text{ and }r_d(e^*)=s(e)\text{ for any }e\in E^1\]
is called the {\it double graph of $E$}. We sometimes refer to the edges in the graph $E$ as {\it real edges} and the additional edges in $E_d$ (i.e. the elements of $(E^1)^*$) as {\it ghost edges}. 
\end{definition}

\begin{definition}[{\sc Path}]\label{deffdp}
Let $E$ be a directed graph. A {\it path} is a nonempty word $p=x_1\dots x_n$ over the alphabet $E^0\cup E^1$ such that either $x_i\in E^1~(i=1,\dots,n)$ and $r(x_i)=s(x_{i+1})~(i=1,\dots,n-1)$ or $n=1$ and $x_1\in E^0$. By definition, the {\it length} $|p|$ of $p$ is $n$ in the first case and $0$ in the latter case. We set $s(p):=s(x_1)$ and $r(p):=r(x_n)$ (here we use the convention $s(v)=v=r(v)$ for any $v\in E^0$). A {\it closed path} is a path $p$ such that $|p|\neq 0$ and $s(p)=r(p)$. A {\it cyclic path} is a closed path $p=x_1\dots x_n$  such that $s(x_i)\neq s(x_j)$ for any $i\neq j$.
\end{definition}

\begin{definition}[\sc Path algebra]\label{defpa}
Let $E$ be a directed graph. The quotient $R\langle E^0\cup E^1\rangle/I $ of the free $R$-ring $R\langle E^0\cup E^1\rangle$ generated by $E^0\cup E^1$ and the ideal $I$ of $R\langle E^0\cup E^1\rangle$ generated by the relations
\begin{enumerate}[(i)]
\item $vw=\delta_{vw}v$ for any $v,w\in E^0$ and
\item $s(e)e=e=er(e)$ for any $e\in E^1$
\end{enumerate}
is called the {\it path algebra of $E$} and is denoted by $P_R(E)$.
\end{definition}

\begin{remark}
The paths in $E$ form a basis for the path algebra $P_R(E)$.
\end{remark}

\begin{definition}[{\sc Weighted graph}]\label{defsg}
A {\it weighted graph} is a pair $(E,w)$ where $E$ is a directed graph and $w:E^1\rightarrow \N=\{1,2,\dots\}$ is a map. If $e\in E^1$, then $w(e)$ is called the {\it weight} of $e$. A weighted graph $(E,w)$ is called {\it row-finite} (resp. {\it finite}) if $E$ is row-finite (resp. finite). In this article all weighted graphs are assumed to be row-finite and to have at least one vertex.
\end{definition}

\begin{remark}
Let $(E,w)$ be a weighted graph. In \cite{hazrat13} and \cite{hazrat-preusser}, $E^1$ is denoted by $E^{\st}$. What is denoted by $E^1$ in \cite{hazrat13} and \cite{hazrat-preusser} is denoted by $\hat E^1$ in this article (see the next definition).
\end{remark}

\begin{definition}[{\sc Directed graph associated to a weighted graph}]
Let $(E,w)$ be a weighted graph. The directed graph $\hat E=(\hat E^0, \hat E^1, \hat s, \hat r)$ where $\hat E^0=E^0$, $\hat E^1:=\{e_1,\dots,e_{w(e)}\mid e\in E^1\}$, $\hat s(e_i)=s(e)$ and $\hat r(e_i)=r(e)$ is called the {\it directed graph associated to $(E,w)$}. We sometimes refer to the edges in the weighted graph $(E,w)$ as {\it structured edges} to distinguish them from the edges in the associated directed graph $\hat E$.
\end{definition}

Until the end of this section $(E,w)$ denotes a weighted graph. A vertex $v\in E^0$ is called a {\it sink} if $s^{-1}(v)=\emptyset$ and {\it regular} otherwise. The set of all regular vertices is denoted by $E^0_{\reg}$. For a $v\in E^0_{\reg}$ we set $w(v):=\max\{w(e)\mid e\in s^{-1}(v)\}$. $\hat E_d$ denotes the double graph of the directed graph $\hat E$ associated to $(E,w)$.

\begin{definition} [{\sc Weighted Leavitt path algebra}]\label{def3}
The quotient $P_R(\hat E_d)/I$ of the path algebra $P_R(\hat E_d)$ and the ideal $I$ of $P_R(\hat E_d)$ generated by the relations
\begin{enumerate}[(i)]
\item $\sum\limits_{e\in s^{-1}(v)}e_ie_j^*= \delta_{ij}v$ for all $v\in E^0_{\reg}$ and $1\leq i, j\leq w(v)$ and
\medskip 

\item $\sum\limits_{1\leq i\leq w(v)}e_i^*f_i= \delta_{ef}r(e)$ for all $v\in E^0_{\reg}$ and $e,f\in s^{-1}(v)$
\end{enumerate}
is called {\it weighted Leavitt path algebra of $(E,w)$} and is denoted by $L_R(E,w)$. In relations (i) and (ii), we set $e_i$ and $e_i^*$ zero whenever $i > w(e)$. 
\end{definition}


\begin{example}\label{wlpapp}
Let $K$ be a field. It is easy to see that the wLpa of a weighted graph consisting of one vertex and $n+k$ loops of weight $n$ is isomorphic to the Leavitt algebra $L_K(n,n+k)$, for details see \cite[Example 4]{hazrat-preusser}. 
\end{example}

\begin{example}\label{exex1}
If $w(e)=1$ for all $e \in E^{1}$, then $L_R(E,w)$ is isomorphic to the usual Leavitt path algebra $L_R(E)$. 
\end{example}

We call a path in the double graph $\hat E_d$ a {\it d-path}. While the d-paths form a basis for the path algebra $P_R(\hat E_d)$, a basis for the weighted Leavitt path algebra $L_R(E,w)$ is formed by the nod-paths, which we will define in the next definition.

For any $v\in E^0_{\reg}$ fix an $e^{v}\in s^{-1}(v)$ such that $w(e^{v})=w(v)$. The words
\[e^v_i(e^v_j)^*~(v\in E^0_{\reg},1\leq i,j\leq w(v))\text{ and }e^*_1f_1~(v\in E^0_{\reg},e,f\in s^{-1}(v))\]
over the alphabet $\hat E_d^1$ are called {\it forbidden}. If $A=x_1\dots x_n$ is a word over some alphabet, then we call the words $x_i\dots x_j~(1\leq i\leq j\leq n)$ {\it subwords of $A$}.   

\begin{definition}[{\sc Nod-path}]\label{deffnod}
A {\it normal d-path} or {\it nod-path} is a d-path such that none of its subwords is forbidden. 
\end{definition}

\begin{theorem}[Hazrat, Preusser, 2017] \label{thmhp}
The nod-paths form a basis for $L_R(E,w)$.
\end{theorem}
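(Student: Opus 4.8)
The plan is to exhibit the nod-paths as an $R$-basis by a rewriting argument based on Bergman's diamond lemma. Recall that the d-paths form an $R$-basis of the path algebra $P_R(\hat E_d)$ and that $L_R(E,w)=P_R(\hat E_d)/I$, where $I$ is the ideal generated by the relations (i) and (ii) of Definition~\ref{def3}. It thus suffices to set up a terminating, confluent reduction system on d-paths whose irreducible elements are precisely the nod-paths.

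First I would orient each defining relation as a reduction rule removing a forbidden subword: relation (i) is used to rewrite an occurrence of $e^v_i(e^v_j)^*$ as $\delta_{ij}v-\sum_{e\in s^{-1}(v)\setminus\{e^v\}}e_ie_j^*$, and relation (ii) to rewrite an occurrence of $e_1^*f_1$ as $\delta_{ef}r(e)-\sum_{2\leq i\leq w(v)}e_i^*f_i$; everything here is meaningful precisely because $e^v$ is chosen with $w(e^v)=w(v)$, so all the indexed letters are genuine edges of $\hat E$. Each right-hand side is an $R$-combination of d-paths of the same length as the removed subword, together with one strictly shorter term, so a single reduction step applied inside a d-path $p$ produces d-paths that are strictly shorter than $p$ or of the same length. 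To obtain termination I would order d-paths lexicographically by three statistics: first the length; then the number of letters of the form $e^v_m$ (with $e^v$ a chosen edge); then $\sum_k(w_k-i_k)$, where the $k$th letter of the d-path has underlying structured edge of weight $w_k$ and index $i_k$. On the equal-length terms rule (i) strictly decreases the second statistic without changing the first, and rule (ii) strictly decreases the third without changing the first two, while the strictly shorter terms decrease the first statistic. Since the three statistics are additive under concatenation, this well-founded order is compatible with left and right multiplication by words, as the diamond lemma requires; hence every d-path reduces in finitely many steps to an $R$-combination of nod-paths, and the nod-paths span $L_R(E,w)$.

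The substantive step is confluence, which by Newman's lemma reduces to resolving the overlap ambiguities. Every forbidden word has length $2$ and has one of two shapes --- a real letter $e^v_i$ followed by a ghost letter $(e^v_j)^*$, or a ghost letter $e_1^*$ followed by a real letter $f_1$ --- so there are no inclusion ambiguities, and there are exactly two families of overlap ambiguities: the words $e^v_i(e^v_1)^*h_1$ with $h\in s^{-1}(v)$ (prefix reducible by (i), suffix by (ii)) and the words $e_1^*e^v_1(e^v_l)^*$ with $v=s(e)\in E^0_{\reg}$ and $1\leq l\leq w(v)$ (prefix reducible by (ii), suffix by (i)). For each such word one expands both reductions all the way down to normal form --- in both cases one gets a $\delta$-contribution, a single length-$2$ term, and a double sum of length-$3$ nod-paths --- and checks that the two normal forms agree term by term. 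This is the only genuine computation in the proof, and it is exactly where the mutual compatibility of the two Leavitt-type families of relations enters. Bergman's diamond lemma then yields that the irreducible d-paths, which are precisely the nod-paths, form an $R$-basis of $L_R(E,w)$.

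The main obstacle is this confluence check. It is elementary but lengthy, and one must take care because reducing a forbidden prefix can create a new forbidden subword --- for instance, reducing the prefix $e^v_i(e^v_1)^*$ inside $e^v_i(e^v_1)^*h_1$ produces terms $e_i(e_1^*h_1)$ in which $e_1^*h_1$ is again forbidden --- so the two outcomes must be driven fully to normal form before being compared, which is why the termination order is established first. A completely equivalent way to package the same computation is via a module: let $M$ be the free $R$-module on the set of nod-paths, let each vertex $v$ act by the projection onto the nod-paths with source $v$ and each edge letter $e_i,e_i^*$ act by prepending that letter and reducing to normal form; verifying relations (i) and (ii) for these operators is once more the ambiguity computation, and then linear independence follows by applying a vanishing combination $\sum_A c_A A$ of nod-paths to the length-$0$ nod-paths, which forces every $c_A$ to vanish. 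Either way, the heart of the matter is the same.
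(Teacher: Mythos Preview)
Your argument is correct. The paper itself does not give a proof of this statement: its ``proof'' is the one-line reference ``See \cite[Theorem 16]{hazrat-preusser}''. What you have sketched is precisely the normal-form/diamond-lemma argument carried out in that reference: orient relations (i) and (ii) so that the left-hand sides are the forbidden words $e^v_i(e^v_j)^*$ and $e_1^*f_1$, establish termination via a suitable well-order, and resolve the two overlap ambiguities $e^v_i(e^v_1)^*h_1$ and $e_1^*e^v_1(e^v_l)^*$. Your identification of the ambiguities is correct (forbidden words have length $2$ and alternate real/ghost shape, so there are no inclusions and only these two length-$3$ overlaps), and your termination order works as stated. The alternative packaging you mention at the end---building a module on the nod-paths and checking the defining relations for the left-multiplication operators---is exactly equivalent and is also a standard way the cited result is presented. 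In short, you have reproduced the proof that the present paper merely cites.
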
 
\begin{proof}
See \cite[Theorem 16]{hazrat-preusser}
\end{proof}

\section{Weighted Leavitt path algebras of polynomial growth}
First we want to recall some general facts on the growth of algebras. Let $K$ be a field and $A$ an $K$-algebra (not necessarily unital), which is generated by a finite-dimensional subspace $V$. For $n\geq 1$ let $V^n$ denote the span of the set $\{v_1\dots v_k\mid k\leq n, v_1,\dots,v_k\in V\}$. Then $V =V^1\subseteq V^2\subseteq \dots$, $A =\bigcup\limits_{n\geq 1}V^n$ and $d_V(n):=\dim V^n<\infty$. Given functions $f, g$ from the positive integers $\N$ to the positive real numbers $\R^+$, we write $f\preccurlyeq g$ if there is a $c\in\N$ such that $f(n)\leq cg(cn)$ for all $n$. If $f\preccurlyeq g$ and $g\preccurlyeq f$, then the functions $f, g$ are called {\it asymptotically equivalent} and we write $f\sim g$. If $W$ is another finite-dimensional subspace that generates $A$, then $d_V\sim d_W$. The {\it Gelfand-Kirillov dimension} or {\it GK dimension} of $A$ is defined as
\[\GKdim A := \limsup\limits_{n\rightarrow \infty}\log_nd_V(n).\]
The definition of the GK dimension does not depend on the choice of the finite-dimensional generating space $V$. If $d_V\preccurlyeq n^m$ for some $m\in \N$, then $A$ is said to have {\it polynomial growth} and we have $\GKdim A \leq m$. If $d_V\sim a^n$ for some real number $a>1$, then $A$ is said to have {\it exponential growth} and we have $\GKdim A =\infty$. 

Until right after the proof of Theorem \ref{thmm}, $K$ denotes a field and $(E,w)$ a finite weighted graph. Further $V$ denotes the finite-dimensional subspace of $L_K(E,w)$ spanned by $\hat E_d^0\cup \hat E_d^1$ (i.e. spanned by the vertices, real edges and ghost edges).

If $X$ is a set, we denote by $\X$ the set of all words over $X$ including the empty word. Together with juxtaposition $\X$ is a monoid. If $A,B\in \X$, then we write $A|B$ if there is a $C\in\X$ such that $AC=B$.

\begin{definition}\label{deffnodc}
Let $p$ and $q$ be nod-paths. If there is a nod-path $o$ such that $p\!\!\not| \, o$ and $poq$ is a nod-path, then we write $p\overset{\nod}{\Longrightarrow} q$. If $pq$ is a nod-path or $p\overset{\nod}{\Longrightarrow} q$, then we write $p\Longrightarrow q$.
\end{definition}


\begin{definition}[{\sc Nod$^2$-path, quasi-cycle}]\label{deffnod2}
A {\it nod$^2$-path} is a nod-path $p$ such that $p^2$ is a nod-path. A {\it quasi-cycle} is a nod$^2$-path $p$ such that none of the subwords of $p^2$ of length $<|p|$ is a nod$^2$-path. A quasi-cycle $p$ is called {\it selfconnected} if $p\overset{\nod}{\Longrightarrow}p$.
\end{definition}
 
\begin{remark}\label{remnod2}
$~$\vspace{-0.2cm}
\begin{enumerate}[(a)]
\item
It is easy to see that if $p=x_1\dots x_n$ is a quasi-cycle, then $x_i\neq x_j$ for all $i\neq j$ (otherwise there would be a subword of $p^2$ of length $<|p|$ that is a nod$^2$-path). It follows that there is only a finite number of quasi-cycles since $E$ is finite.
\item
Let $p=x_1\dots x_n$ be a quasi-cycle and $\pi\in S^n$ an $n$-cycle. Then $q:=x_{\pi(1)}\dots x_{\pi(n)}$ is a quasi-cycle and we write $p\approx q$. Clearly $\approx$ is an equivalence relation on the set of all quasi-cycles.
\item 
Let $p=x_1\dots x_n$ be a quasi-cycle. Then $p^*:=x_n^*\dots x_1^*$ is a quasi-cycle.
\end{enumerate}
\end{remark}

\begin{example}\label{exqu}
Suppose $(E,w)$ is the weighted graph 
\[
\xymatrix@C+15pt{
u\ar[r]^{e,2}&  v\ar@/^1.7pc/[r]^{f}\ar@/_1.7pc/[r]_{g}  & x  
}
\]
(here $e$ has weight $2$ and $f$ and $g$ have weight $1$). Then the associated directed graph $\hat E$ and its double graph $\hat E_d$ are
\[
\xymatrix@C+15pt{
u\ar@/^1.7pc/[r]^{e_1}\ar@/_1.7pc/[r]_{e_2}&  v\ar@/^1.7pc/[r]^{f_1}\ar@/_1.7pc/[r]_{g_1}  & x 
}\quad\text{ resp. }\quad \xymatrix@C+15pt{
u\ar@/^1.7pc/[r]^{e_1}\ar@/_1.7pc/[r]_{e_2}&  v\ar@{-->}@/_1.1pc/[l]^{e_1^*}\ar@{-->}@/^1.1pc/[l]_{e_2^*}\ar@/^1.7pc/[r]^{f_1}\ar@/_1.7pc/[r]_{g_1}  & x \ar@{-->}@/_1.1pc/[l]^{f_1^*}\ar@{-->}@/^1.1pc/[l]_{g_1^*} 
}
\]
(for ghost edges we draw dashed arrows).
One checks easily that $p:=e_2f_1g_1^*e_2^*$ and $q:=e_2f_1g_1^*e_1^*$ are quasi-cycles independent of the choice of $e^v$. Further $pqp$ and $qpq$ are nod-paths and therefore $p$ and $q$ are selfconnected. This example shows that a quasi-cycle can meet a vertex more than once.
\end{example}

The following lemma shows, that quasi-cycles behave like cycles in a way (one cannot "take a shortcut").
\begin{lemma}\label{lemshc}
Let $p=x_1\dots x_n$ be a quasi-cycle and $1\leq i,j\leq n$. Then $x_ix_j$ is a nod-path iff $i<n$ and $j=i+1$ or $i=n$ and $j=1$.
\end{lemma}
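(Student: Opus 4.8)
The plan is to exploit the fact that every forbidden word has length exactly $2$; consequently a d-path is a nod-path precisely when each pair of consecutive letters is non-forbidden, and conversely, once a word is already known to be a d-path, being a nod-path is a purely local condition on adjacent letters. I would use this to splice copies of subwords of $p$ together: whenever $x_ix_j$ is a nod-path with $(i,j)$ not of the asserted form, this will produce a nod$^2$-path of length $<|p|$ sitting inside $p^2$, contradicting the minimality built into the definition of a quasi-cycle.

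First I would dispatch the ``if'' direction: $x_ix_{i+1}$ (for $i<n$) is a subword of $p$, and $x_nx_1$ is a subword of $p^2$; since $p$ is a nod$^2$-path, both $p$ and $p^2$ are nod-paths, and every subword of a nod-path is a nod-path, so $x_ix_j$ is a nod-path in both listed cases.

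For the converse I would assume $x_ix_j$ is a nod-path and split according to whether $j\le i$ or $j\ge i+1$. If $j\le i$, set $W:=x_jx_{j+1}\cdots x_i$, a subword of $p$, hence a nod-path; the only pair of adjacent letters in $W^2$ not already adjacent inside $W$ is the ``seam'' $x_ix_j$, which by hypothesis is a non-forbidden d-path pair, so $W^2$ is again a d-path all of whose adjacent pairs are non-forbidden, i.e.\ a nod-path, whence $W$ is a nod$^2$-path; as $W$ is a subword of $p^2$ of length $i-j+1$, which is $<n$ unless $(i,j)=(n,1)$, the quasi-cycle condition forces $(i,j)=(n,1)$, which is one of the listed pairs. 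If instead $j\ge i+1$, then when $j=i+1$ we automatically have $i<n$ (since $j\le n$), again a listed pair; and when $j\ge i+2$ I would consider $P':=x_jx_{j+1}\cdots x_nx_1\cdots x_i$, a subword of $p^2$ of length $n+i-j+1\le n-1$, whose square $(P')^2$ has only the seam $x_ix_j$ as a new adjacent pair (the pair $x_nx_1$ occurring inside $P'$ is harmless precisely because $p^2$, not merely $p$, is a nod-path), so $(P')^2$ is a nod-path, $P'$ is a nod$^2$-path of length $<|p|$ inside $p^2$, contradicting minimality. This exhausts all cases; the degenerate case $n=1$ is trivial, the only pair being $(1,1)=(n,1)$.

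The part requiring the most care is the bookkeeping of exactly which adjacent letter-pairs occur in $W^2$ and in $(P')^2$, together with the verification that each such pair is a non-forbidden d-path pair and that the whole word is a d-path. Everything ultimately reduces to two facts: forbidden words have length $2$, and $p^2$ (hence each of its subwords, in particular $x_nx_1$) is a nod-path. Beyond this careful case-splitting I do not foresee any genuine difficulty.
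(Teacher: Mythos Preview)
Your proposal is correct and follows essentially the same route as the paper's proof: in each case you (and the paper) build the subword $x_j\cdots x_i$ (when $j\le i$) or $x_j\cdots x_nx_1\cdots x_i$ (when $j>i$) of $p^2$, observe that the hypothesis on $x_ix_j$ makes its square a nod-path, and invoke the minimality clause in the definition of quasi-cycle. Your explicit remark that forbidden words have length $2$, so that the nod-path property is detected by adjacent pairs, makes transparent what the paper leaves implicit; otherwise the case split and the constructions coincide.
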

\begin{proof}
If $i<n$ and $j=i+1$ or $i=n$ and $j=1$, then clearly $x_ix_j$ is a nod-path. Suppose now that $x_ix_j$ is a nod-path. \\
\\
\underline{case 1} Suppose $i=j$. Assume that $n>1$. Then we get the contradiction that $x_i$ is a nod$^2$-path which is a subword of $p^2$ of length $1<|p|=n$. Hence $n=1$ and we have $i=j=1=n$.\\
\\
\underline{case 2} Suppose $i<j$. Then $x_j\dots x_nx_1\dots x_i$ is a nod$^2$-path which is a subword of $p^2$ of length $n-j+1+i$. It follows that $j=i+1$.\\
\\
\underline{case 3} Suppose $j<i$. Then $x_j\dots x_i$ is a nod$^2$-path which is a subword of $p^2$ of length $i-j+1$. It follows that $j=1$ and $i=n$.
\end{proof}

\begin{lemma}\label{lemexp}
If there is a selfconnected quasi-cycle $p$, then $L_K(E,w)$ has exponential growth.
\end{lemma}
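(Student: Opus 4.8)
The plan is to produce, for the generating subspace $V$ already fixed above, exponentially many pairwise distinct nod-paths whose lengths grow only linearly in a parameter $k$. Since by Theorem \ref{thmhp} distinct nod-paths are linearly independent in $L_K(E,w)$, this forces $d_V$ to grow exponentially, hence $L_K(E,w)$ to have exponential growth. Throughout I will use the elementary remark that, since every forbidden word has length $2$, a d-path is a nod-path if and only if none of its subwords of length $2$ is forbidden.

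First I would set up the building blocks. Since $p$ is selfconnected, fix a nod-path $o$ with $p\nmid o$ and with $pop$ a nod-path; note $o\neq\emptyset$, and $p^2$ is a nod-path because $p$ is a quasi-cycle. I claim that for every $k\geq 1$ and all $a_1,\dots,a_k\geq 1$ the word
\[W(a_1,\dots,a_k):=p^{a_1}\,o\,p^{a_2}\,o\cdots o\,p^{a_k}\]
is a nod-path. Indeed it is a d-path, since the fact that $p^2$ and $pop$ are paths makes all the internal junctions $p|p$, $p|o$, $o|p$ connect properly; and every length-$2$ subword of $W(a_1,\dots,a_k)$ occurs as a length-$2$ subword of $p$, of $p^2$, of $o$, or of $pop$, each of which is a nod-path, so no such subword is forbidden. (In particular every power $p^n$ is a nod-path.)

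The core of the argument is to show that, for fixed $k$, the map $(a_1,\dots,a_k)\mapsto W(a_1,\dots,a_k)$ is injective; in fact I would prove that $W(a_1,\dots,a_k)=W(b_1,\dots,b_m)$ implies $k=m$ and $(a_i)=(b_i)$, by induction on $k+m$, cancelling the common prefix $p^{\min(a_1,b_1)}$ and then the common $o$. The one way the first cancellation could fail is that, say with $b_1>a_1$, the nonempty word $o$ is a prefix of $p^{\,b_1-a_1}o\cdots op^{b_m}$; since $b_1-a_1\geq 1$ this makes $o$ either begin with $p$ — impossible because $p\nmid o$ — or be a nonempty proper prefix $x_1\cdots x_r$ (with $1\leq r<|p|$) of $p=x_1\cdots x_{|p|}$. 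The latter is where the real work is, and it is killed by Lemma \ref{lemshc}: the subword $x_rx_1$ of $pop$ would then have to be a nod-path, which by Lemma \ref{lemshc} is false for $1\leq r<|p|$, contradicting that $pop$ is a nod-path. This injectivity is the step I expect to be the main obstacle — it is the combinatorics-on-words/periodicity point — and Lemma \ref{lemshc} is precisely the tool that handles it.

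It then remains to count. For $(a_1,\dots,a_k)\in\{1,2\}^k$ with exactly $j$ entries equal to $2$, the nod-path $W(a_1,\dots,a_k)$ has length $(k+j)|p|+(k-1)|o|\leq k(2|p|+|o|)$, and by injectivity there are $\binom{k}{j}$ distinct such nod-paths. With $c:=2|p|+|o|$ and $j=\lfloor k/2\rfloor$, and using that each nod-path of length $\leq ck$ lies in $V^{ck}$ while $d_V$ is non-decreasing, we get $d_V(ck)\geq\binom{k}{\lfloor k/2\rfloor}\geq 2^{k}/(k+1)$ for all $k\geq 1$, whence $d_V(n)\geq 2^{\,n/c-1}/(n/c+1)$ for $n\geq c$. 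Combined with the standard upper bound $d_V\preccurlyeq 2^{n}$ valid for any finitely generated algebra, this yields $d_V\sim 2^{n}$, so $L_K(E,w)$ has exponential growth (and in particular $\GKdim L_K(E,w)=\infty$). The routine parts here are the junction bookkeeping in the second step and the asymptotic estimate in the last step.
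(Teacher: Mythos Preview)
Your proof is correct and follows essentially the same strategy as the paper's: build the family of nod-paths $p^{i_1}op^{i_2}\cdots op^{i_k}$, show that distinct exponent tuples give distinct nod-paths, and count. The only minor variations are that for the injectivity step you invoke Lemma~\ref{lemshc} (the paper instead uses Remark~\ref{remnod2}(a), matching the letter at position $|o|+1$ to get $x_{|o|+1}=x_1$), and you are more explicit both about why each $W(a_1,\dots,a_k)$ is a nod-path and about the final asymptotic estimate, which the paper states without detail.
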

\begin{proof}
Let $o$ be a nod-path such that $p\!\!\not| \,o$ and $pop$ is a nod-path. Let $n\in \N$. Consider the nod-paths
\begin{equation}
p^{i_1}op^{i_2}\dots op^{i_k}
\end{equation}
where $k,i_1,\dots,i_k\in \N$ satisfy
\begin{equation}
(i_1+\dots+i_k)|p|+(k-1)|r|\leq n.
\end{equation}
Let $A=(k,i_1,\dots,i_k)$ and $B=(k',i'_1,\dots,i'_{k'})$ be different solutions of (2). Assume that $A$ and $B$ define the same nod-path in (1). After cutting out the common beginning, we can assume that the nod-path defined by $A$ starts with $o$ and the nod-path defined by $B$ with $p$ or vice versa. Since $p\!\!\not| \,o$, it follows that $|p|>|o|$. Write $p=x_1\dots x_m$. Since the next letter after an $o$ must be a $p$, we get $x_{|o|+1}=x_1$ which contradicts Remark \ref{remnod2}(a). Hence different solutions of (2) define different nod-paths in (1). By Theorem \ref{thmhp} the nod-paths in (1) are linearly independent in $V^n$. The number of solutions of (2) is $\sim 2^n$ and hence $L_R(E,w)$ has exponential growth. 
\end{proof}

\begin{corollary}\label{corexp}
If there is a vertex $v$ and structured edges $e,f \in s^{-1}(v)$ such that $w(\alpha), w(\beta)\geq 2$, then $L_R(E,w)$ has exponential growth. 
\end{corollary}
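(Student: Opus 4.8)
\emph{Plan of proof.} By Lemma~\ref{lemexp} it is enough to produce a selfconnected quasi-cycle in $L_K(E,w)$. My plan is to build one, of length $1$ or $2$, out of the real and ghost copies of a single suitable edge at $v$.

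First I would normalise the data: since $e\neq f$ and $e^v$ is one fixed edge, at most one of $e,f$ equals $e^v$, so after relabelling I may assume $f\neq e^v$ while still $w(f)\ge 2$. This matters because every forbidden word is either of the form $e^v_i(e^v_j)^*$ (a real copy of $e^v$ followed by a ghost) or of the form $g^*_1h_1$ with $g,h\in s^{-1}(v)$ (a ghost of index $1$ followed by a real of index $1$); with $f\neq e^v$, no forbidden word of the first kind involves any copy of $f$, so short words built from $f_1,f_2,f_1^*,f_2^*$ can only be spoiled by the very rigid second kind. I would then split into two cases according to whether $f$ is a loop.

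If $f$ is a loop at $v$, I would take $p:=f_1$ and $o:=f_2$ (both available since $w(f)\ge 2$). Here everything is immediate: every forbidden word contains a ghost letter, so the ghost-free $d$-paths $f_1f_1$ and $f_1f_2f_1$ are automatically nod-paths; hence $p$ is a nod$^2$-path, it is trivially a quasi-cycle, having length $1$, and $p\overset{\nod}{\Longrightarrow}p$ is witnessed by $o$ (with $p\!\!\not| \,o$ since $f_1\neq f_2$). If $f$ is not a loop, I would take $p:=f_2f_1^*$ and $o:=f_2f_2^*$, both $d$-paths from $v$ to $v$ through $r(f)$. The verification then amounts to inspecting the at most four distinct length-$2$ subwords occurring in $p,p^2,o,pop$ — namely $f_2f_1^*$, $f_1^*f_2$, $f_2f_2^*$, $f_2^*f_2$ — and observing that none is forbidden: the first kind is excluded because $f\neq e^v$, the second kind because each of these subwords either begins with a real edge or contains a letter of index $2$. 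This shows $p$ is a nod$^2$-path with $p\overset{\nod}{\Longrightarrow}p$ (again $p\!\!\not| \,o$ since $p\neq o$ have equal length), and the last thing to check is that $p$ is a genuine quasi-cycle, i.e. that neither proper subword $f_2$, $f_1^*$ of $p^2$ is a nod$^2$-path — which holds because, $f$ not being a loop, $f_2f_2$ and $f_1^*f_1^*$ are not even $d$-paths.

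In both cases a selfconnected quasi-cycle has been exhibited, so $L_K(E,w)$ has exponential growth by Lemma~\ref{lemexp}. I expect the only genuinely delicate point to be the minimality clause in the definition of a quasi-cycle (no subword of $p^2$ shorter than $p$ may be a nod$^2$-path): it is precisely this clause that forces the loop/non-loop split, since when $f$ is a loop the natural length-$2$ candidate $f_2f_1^*$ is \emph{not} a quasi-cycle — its length-$1$ subword $f_2$ is already a nod$^2$-path — so one must retreat to the length-$1$ quasi-cycle $f_1$ in that case. Everything else is a routine inspection of the two short lists of forbidden words.
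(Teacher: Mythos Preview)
Your argument is correct and follows essentially the same route as the paper: exhibit a selfconnected quasi-cycle built from a single edge at $v$ distinct from $e^v$, splitting according to whether that edge is a loop, and then invoke Lemma~\ref{lemexp}. The only cosmetic differences are that the paper normalises by re-choosing $e^v=e$ rather than by relabelling $e,f$, and in each case the roles of your $p$ and $o$ are essentially interchanged with the paper's choices (the paper takes $p=f_2$, $o=f_2^*$ in the loop case and $p=f_2f_2^*$, $o=f_2f_1^*$ in the non-loop case).
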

\begin{proof}
Choose $e^v=e$. First suppose that $r(f)=v$. Then $p:=f_2$ is a quasi-cycle. Further $f_2f_2^*f_2$ is a nod-path and therefore $p$ is selfconnected. Hence, by the previous lemma, $L_R(E,w)$ has exponential growth. Now suppose that $r(f)\neq v$. Then $p:=f_2f_2^*$ is a quasi-cycle. Further $f_2f_2^*f_2f_1^*f_2f_2^*$ is a nod-path and therefore $p$ is selfconnected. Hence, by the previous lemma, $L_R(E,w)$ has exponential growth.
\end{proof}

Let $E'$ denote the set of all real and ghost edges which do not appear in a quasi-cycle. Let $P'$ denote the set of all nod-paths which are composed from elements of $E'$.
\begin{lemma}\label{lemfnt}
$|P'|<\infty$.
\end{lemma}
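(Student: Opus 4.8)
The plan is to bound the length of every element of $P'$: I will show that a nod-path all of whose edges lie in $E'$ has length at most $|E'|$. Since $E'$ is finite, there are only finitely many words of length $\le|E'|$ over $E'$ (and only finitely many vertices, regarded as length-$0$ paths), so this yields $|P'|<\infty$. The whole argument rests on the elementary observation that every forbidden word has length $2$; consequently a d-path is a nod-path if and only if none of its length-$2$ subwords is forbidden, and in particular every subword of a nod-path is again a nod-path.

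Assume for contradiction that some $p=x_1\dots x_n\in P'$ has $n>|E'|$. Each letter $x_i$ lies in the $|E'|$-element set $E'$, so by the pigeonhole principle there exist $i<j$ with $x_i=x_j$. Put $q:=x_ix_{i+1}\dots x_{j-1}$; this is a nod-path of length $\ge1$ (being a subword of $p$) whose letters all lie in $E'$. Since $x_{j-1}x_j$ is a length-$2$ segment of the d-path $p$, the range of $x_{j-1}$ equals the source of $x_j=x_i$, so $q$ is closed and $q^2$ is a d-path. Every length-$2$ subword of $q^2$ is in fact a length-$2$ subword of $p$ --- the only candidate not visibly of this form, $x_{j-1}x_i$, equals $x_{j-1}x_j$, which occurs in $p$ --- so none is forbidden and $q^2$ is a nod-path. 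Thus $q$ is a nod$^2$-path.

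The heart of the proof, which I expect to be the main obstacle, is the claim that every nod$^2$-path $q=y_1\dots y_m$ contains, among its cyclic sub-arcs, a quasi-cycle whose letters are letters of $q$. For $1\le a\le m$ and $1\le\ell\le m$ let $q[a,\ell]$ denote the word obtained by reading $y_a,y_{a+1},\dots$ cyclically (indices mod $m$) for $\ell$ letters; since $q^2$ is a nod-path, each $q[a,\ell]$ is a subword of $q^2$ and hence a nod-path. The set of pairs $(a,\ell)$ for which $q[a,\ell]$ is a nod$^2$-path is nonempty, since it contains $(1,m)$; choose one, $(a_0,\ell_0)$, with $\ell_0$ minimal, and set $r:=q[a_0,\ell_0]$. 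Any subword of $r^2$ of length $<\ell_0$ is a cyclic sub-arc of $r$, hence equals $q[a',k]$ for some $a'$ and some $k<\ell_0$, and so by minimality of $\ell_0$ is not a nod$^2$-path; therefore $r$ is a quasi-cycle. Since all letters of $r$ are letters of $q$, and in our situation these all lie in $E'$, we have produced a quasi-cycle built from edges in $E'$ --- contradicting the definition of $E'$ as the set of edges occurring in no quasi-cycle. Hence every $p\in P'$ has $|p|\le|E'|$, and $|P'|<\infty$.
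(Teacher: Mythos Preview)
Your overall strategy matches the paper's: show that any nod-path in $P'$ has pairwise distinct letters (hence length $\le|E'|$) by arguing that a repeated letter would produce a nod$^2$-path, and that every nod$^2$-path eventually yields a quasi-cycle using only its own letters. The execution of this last step, however, contains a genuine gap.

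You minimise $\ell$ over pairs $(a,\ell)$ for which the \emph{cyclic sub-arc} $q[a,\ell]$ of $q$ is a nod$^2$-path, obtain $r=q[a_0,\ell_0]$, and then assert that every subword of $r^2$ of length $<\ell_0$ ``equals $q[a',k]$ for some $a'$''. This is false when $\ell_0<m$: a subword of $r^2$ that straddles the boundary between the two copies of $r$ is indeed a cyclic sub-arc of $r$, but a cyclic sub-arc of $r$ is \emph{not} in general a cyclic sub-arc of $q$. Concretely, if $r=z_1\dots z_{\ell_0}$ with $z_t=y_{a_0+t-1\bmod m}$ and $\ell_0<m$, then the boundary-crossing subword $z_{\ell_0}z_1$ corresponds to $y_{a_0+\ell_0-1}\,y_{a_0}$, and these two letters are \emph{not} cyclically adjacent in $q$. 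Hence this subword is not of the form $q[a',2]$, your minimality hypothesis says nothing about it, and you cannot conclude that $r$ is a quasi-cycle. (The case $\ell_0<m$ genuinely occurs, since the nod$^2$-path $q$ you construct from $p$ need not have distinct letters.)

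The fix is easy and is precisely what the paper does: drop the restriction to cyclic sub-arcs of $q$. If a nod$^2$-path $r$ is not a quasi-cycle, then \emph{by definition} some subword $r'$ of $r^2$ with $|r'|<|r|$ is a nod$^2$-path, and every letter of $r'$ is already a letter of $r$. Iterating this descent (equivalently, minimising length over all nod$^2$-paths whose letters lie in the finite set $\{x_i,\dots,x_{j-1}\}\subseteq E'$) terminates in a quasi-cycle whose letters all lie in $E'$, contradicting the definition of $E'$.
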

\begin{proof}
Let $p'=x_1\dots x_n\in P'$. Assume that there are $1\leq i <j\leq n$ such that $x_i=x_j$. Then $x_i\dots x_{j-1}$ is a nod$^2$-path. Since for any nod$^2$-path $q$ which is not a quasi-cycle there is a shorter nod$^2$-path $q'$ such that any letter of $q'$ already appears in $q$, we get a contradiction. Hence the $x_i$'s are pairwise distinct. It follows that $|P'|<\infty$ since $|E'|<\infty$.
\end{proof}
A sequence $p_1,\dots,p_k$ of quasi-cycles such that $p_i\not\approx p_j$ for any $i\neq j$ is called a {\it chain of length $k$} if $p_1 \Longrightarrow p_2\Longrightarrow \dots \Longrightarrow p_k$.
\begin{theorem}\label{thmm}
$~$\vspace{-0.2cm}
\begin{enumerate}[(i)]
\item $L_K(E,w)$ has polynomial growth iff there is no selfconnected quasi-cycle.
\item If $L_K(E,w)$ has polynomial growth, then $\GKdim L_K(E,w)=d$ where $d$ is the maximal length of a chain of quasi-cycles.
\end{enumerate}
\end{theorem}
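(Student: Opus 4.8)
The plan is to prove (i) and (ii) together by pinning down the growth function $d_V$ up to asymptotic equivalence. One implication of (i) is already in place: by Lemma~\ref{lemexp}, a selfconnected quasi-cycle forces exponential growth, which rules out polynomial growth. So from now on I would assume there is no selfconnected quasi-cycle and aim to show $d_V(n)\sim n^{d}$, where $d$ is the maximal length of a chain of quasi-cycles; note $d<\infty$ because by Remark~\ref{remnod2}(a) there are only finitely many quasi-cycles. This asymptotic yields the remaining implication of (i) and the value in (ii) at once.

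For the lower bound $d_V(n)\succcurlyeq n^{d}$ I would imitate the proof of Lemma~\ref{lemexp}. Fix a chain $p_1\Longrightarrow\dots\Longrightarrow p_d$, and for each $j<d$ choose a nod-path $o_j$ that is empty if $p_jp_{j+1}$ is a nod-path and otherwise satisfies $p_j\!\!\not|\,o_j$ with $p_jo_jp_{j+1}$ a nod-path. Since a power of a quasi-cycle is again a nod-path and forbidden words have length $2$, every length-$2$ subword of
\[w(i_1,\dots,i_d):=p_1^{i_1}o_1p_2^{i_2}o_2\cdots o_{d-1}p_d^{i_d}\]
occurs inside some $p_j^{2}$ or some $p_jo_jp_{j+1}$, so $w(i_1,\dots,i_d)$ is a nod-path for all $i_1,\dots,i_d\in\N$. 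Using $p_j\!\!\not|\,o_j$ to detect where the $j$-th periodic block ends, and the distinctness of the letters of $p_j$ (Remark~\ref{remnod2}(a)) to read off $i_j$, distinct tuples yield distinct nod-paths. The number of tuples with $\sum_j i_j|p_j|+\sum_j|o_j|\le n$ grows like $n^{d}$, and by Theorem~\ref{thmhp} the corresponding nod-paths are linearly independent in $V^{n}$. Hence $d_V(n)\succcurlyeq n^{d}$ and $\GKdim L_K(E,w)\ge d$.

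For the upper bound I would prove a normal form for nod-paths: assuming no quasi-cycle is selfconnected, every nod-path $p$ can be written
\[p=\beta_0\,c_1^{k_1}\,\beta_1\,c_2^{k_2}\,\beta_2\cdots c_m^{k_m}\,\beta_m,\]
where each $c_j$ is a quasi-cycle, $k_j\ge 1$, each $\beta_j$ lies in a fixed finite set $\mathcal B$ of nod-paths, $c_1\Longrightarrow c_2\Longrightarrow\dots\Longrightarrow c_m$, and $c_i\not\approx c_j$ for $i\ne j$. Here the $c_j^{k_j}$ record the maximal ``cyclic'' subwords of $p$ (a subword being cyclic if it is a subword of $c^{N}$ for a quasi-cycle $c$ and all large $N$), with overhangs of length $<|c_j|$ absorbed into the adjacent $\beta_j$. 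The no-selfconnected hypothesis is exactly what makes this work. First, it forces a maximal cyclic block to be genuinely a quasi-cycle power: a block $\pi^{k}$ with $\pi$ a non-primitive nod$^2$-path would, after splicing a quasi-cycle hidden inside $\pi$ with the remainder of $\pi^{k}$, produce a selfconnected quasi-cycle; combined with Lemma~\ref{lemfnt} and the observation (from its proof) that a repeated letter in a nod-path creates a nod$^2$-path, this bounds the length of the residual pieces and makes $\mathcal B$ finite. Second, it forbids an $\approx$-class to recur among the $c_j$: if $c_i\approx c_j$ with $i<j$, then splicing the word from the last copy of $c_i$ in block $i$ to the first copy of $c_j$ in block $j$ gives, after a rotation, a nod-path witnessing $c_i\overset{\nod}{\Longrightarrow}c_i$. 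Since there are finitely many $\approx$-classes of quasi-cycles, $c_1,\dots,c_m$ is then a chain, so $m\le d$. Granting the normal form, the counting is routine: there are only finitely many ``shapes'' $(\beta_0,c_1,\beta_1,\dots,c_m,\beta_m)$, and for each fixed shape the nod-paths of length $\le n$ of that shape correspond to the tuples $(k_1,\dots,k_m)$ with $\sum_j k_j|c_j|\le n$, of which there are $O(n^{m})=O(n^{d})$. Summing over shapes, $d_V(n)=O(n^{d})$; in particular $L_K(E,w)$ has polynomial growth, and with the lower bound $\GKdim L_K(E,w)=d$, which establishes both parts.

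The hard part will be the normal form, and within it the conversion of ``no selfconnected quasi-cycle'' into the two finiteness facts it underlies: first, finiteness of the set $\mathcal B$ of admissible residual pieces, i.e.\ that a nod-path meeting no full traversal of a quasi-cycle has bounded length, which is a combinatorics-on-words argument driven by repeated letters and Lemma~\ref{lemfnt}; and second, non-recurrence of $\approx$-classes among the $c_j$, which via a splicing argument in the style of Lemma~\ref{lemexp} yields $m\le d$. A lesser nuisance is the injectivity of the parametrization in the lower bound, again handled by the pairwise distinctness of the letters of a quasi-cycle.
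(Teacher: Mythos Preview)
Your outline matches the paper's proof: Lemma~\ref{lemexp} gives one implication of (i), and the rest comes from pinning down $d_V(n)\sim n^d$ via a normal form for nod-paths (upper bound) and an explicit $d$-parameter family (lower bound). Two differences are worth flagging. First, in the paper's normal form each residual piece is split as $q_i o_{i+1}$ with $q_i$ a proper prefix of $p_i$ and $o_{i+1}\in P'$ a nod-path built \emph{entirely from edges occurring in no quasi-cycle}; finiteness of the residual data is then literally Lemma~\ref{lemfnt}. Your $\beta_j$'s are allowed to use quasi-cycle edges, so the claim ``$\mathcal B$ is finite'' is strictly stronger than Lemma~\ref{lemfnt} and does not follow from the repeated-letter argument alone: a repeated letter in $\beta_j$ gives a nod$^2$-subword $q$, but the quasi-cycle one extracts by the reduction in the proof of Lemma~\ref{lemfnt} may straddle the seam of $q^2$ and hence need not be a contiguous subword of $\beta_j$. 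The paper's choice of $P'$ sidesteps this issue entirely. Second, for the lower-bound injectivity the paper carries out an explicit case analysis (empty versus nonempty $o_i$, then $|p_i|\le|p_{i+1}|$ versus $|p_i|>|p_{i+1}|$), whose crux is that no quasi-cycle can sit as a proper subword inside the square of an inequivalent one; your one-line sketch invokes the right ingredients but leaves this unworked, and in particular does not cover the case where the connector $o_j$ is empty, where the condition $p_j\!\!\not|\,o_j$ gives no information.
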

\begin{proof}
If there is a selfconnected quasi-cycle, then $L_K(E,w)$ has exponential growth by Lemma \ref{lemexp}. Suppose now that there is no selfconnected quasi-cycle. By Theorem \ref{thmhp} the nod-paths of length $\leq n$ form a basis for $V^n$. 
Clearly we can write any nod-path of length $\leq n$ in the form 
\begin{equation}
o_1p_1^{l_1}q_1o_2p_2^{l_2}q_2o_3\dots o_kp_k^{l_k}q_ko_{k+1}
\end{equation}
where $o_i\in P'~(1\leq i \leq k+1)$, $p_1,\dots,p_k$ is a chain of quasi-cycles, $l_i\geq 0~(1\leq i \leq k)$ and $q_i\neq p_i$ is a nod-path such that $q_i|p_i~(1\leq i \leq k)$ (we allow the $o_i$'s and $q_i$'s to be the empty word). Clearly $l_1|p_1| +\dots+l_k|p_k| \leq n$. This implies that for a fixed chain $p_1,\dots,p_k$ of quasi-cycles, the number of the words in (3) is $\preccurlyeq n^k \leq n^d$. Since there are only finitely many quasi-cycles, the number of nod-paths of length $\leq n$ is $\preccurlyeq n^d$.\\
On the other hand, choose a chain $p_1,\dots,p_d$ of length $d$. Then $p_1o_1p_2\dots o_{d-1}p_d$ is a nod-path for some $o_1,\dots,o_{d-1}$ such that for any $i\in\{1,\dots,d-1\}$, $o_i$ is either the empty word or a nod-path such that $p_i\!\!\not| \,o_i$. Consider the nod-paths
\begin{equation}
p_1^{l_1}o_1p_2^{l_2}\dots o_{d-1}p_d^{l_d}
\end{equation}
where $l_1,\dots,l_d\in \N$ satisfy
\begin{equation}
l_1|p_1| +\dots+l_d|p_d|+|o_1|+\dots+|o_{d-1}|\leq n.
\end{equation}
Let $A=(l_1,\dots,l_d)$ and $B=(l'_1,\dots,l'_{d})$ be different solutions of (5). Assume that $A$ and $B$ define the same nod-path in (4). After cutting out the common beginning, we can assume that the nod-path defined by $A$ starts with $o_ip_{i+1}$ for some $i\in\{1,\dots,d-1\}$ and the nod-path defined by $B$ with $p_io_i$ or $p_i^2$. If $o_i$ is the empty word, then we get the contradiction $p_i=p_{i+1}$, since $p_i$ and $p_{i+1}$ are quasi-cycles. Suppose now that $o_i$ is not the empty word. Since $p_i\!\!\not| \,o_i$, it follows that $|o_i|<|p_i|$. Further $|p_i|<|o_i|+|p_{i+1}|$ (otherwise $p_{i+1}$ would be a subword of $p_i$ of length $<|p_i|$). Write $p_i=x_1\dots x_k$ and $p_{i+1}=y_1\dots y_m$.\\
\\
\underline{case 1} Assume that $|p_i|\leq |p_{i+1}|$. Then $o_i=x_1\dots x_j$ and $p_{i+1}=x_{j+1}\dots x_k x_1\dots x_jy_{k+1}\dots y_m$ for some $j\in\{1,\dots,k-1\}$. By Remark \ref{remnod2}(b), $x_{j+1}\dots x_k x_1\dots x_j$ is a quasi-cycle. It follows that $k=m$. Hence we get the contradiction $p_i\approx p_{i+1}$.\\
\\
\underline{case 2} Assume that $|p_i|>|p_{i+1}|$. Then $o_i=x_1\dots x_j$ and $p_{i+1}=x_{j+1}\dots x_k x_1\dots x_l$ for some $j\in\{1,\dots,k-1\}$ and $l\in\{1,\dots, j-1\}$. But this yields the contradiction that $p_{i+1}$ is a subword of $p_i^2$ of length $<|p_i|$.\\
\\
Hence different solutions of (5) define different nod-paths in (4). By Theorem \ref{thmhp} the nod-paths in (4) are linearly independent in $V^n$. The number of solutions of (5) is $\sim n^d$ and thus $n^d \preccurlyeq$ the number of nod-paths of length $\leq n$.
\end{proof}
As a corollary we recover Theorem 5 of \cite{zel12}. We use the following terminology: Let $E$ be a directed graph. We denote the set of all cyclic paths by $\CP$. If $p\in \CP$, we denote by $E(p)$ the subgraph of $E$ defined by $p$. A {\it cycle} (in the sense of \cite{zel12}) is a subgraph $E(p)$ where $p\in \CP$.
\begin{corollary}\label{corm}
Let $K$ be a field and $E$ be a finite directed graph. Then: 
\begin{enumerate}[(i)]
\item $L_K(E)$ has polynomial growth iff two distinct cycles do not have a common vertex.
\item If $L_K(E)$ has polynomial growth, then $\GKdim L_K(E)=\max(2d_1-1,2d_2)$ where $d_1$ is the maximal length of a chain of cycles and $d_2$ is the maximal length of a chain of cycles with an exit.
\end{enumerate}
\end{corollary}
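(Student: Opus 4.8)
The plan is to deduce Corollary \ref{corm} from Theorem \ref{thmm} by translating "quasi-cycle", "selfconnected quasi-cycle" and "chain of quasi-cycles" into the combinatorics of $E$; since all weights are $1$ we have $\hat E=E$, $\hat E_d=E_d$ and $L_K(E,w)=L_K(E)$. The cornerstone is the normal form of a nod-path in $E_d$: forbidden word (ii) forbids a ghost edge from being immediately followed by a real one, so every nod-path has the shape $\mu\nu^*$ with $\mu,\nu$ (possibly trivial) strings of real edges and $r(\mu)=r(\nu)$, and its only possible forbidden subword is an $e^v(e^v)^*$ at the turning point $\mu\nu^*$. Consequently $(\mu\nu^*)^2$ is a nod-path only when $\mu$ or $\nu$ is trivial, so the nod$^2$-paths are exactly the closed paths of $E$ and their ghosts, and the quasi-cycles are exactly the cyclic paths $p$ of $E$ together with their ghosts $p^*$; for real $p,q$ one has $p\approx q$ iff $E(p)=E(q)$, while a real quasi-cycle is never $\approx$ to a ghost one. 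I would also record two structural facts used throughout: the map $p\mapsto p^*$ reverses nod-paths and carries forbidden words to forbidden words, so the whole picture is symmetric under $p\mapsto p^*$ and under rotation of the underlying cycle; and a cyclic path can be neither a proper prefix nor a proper suffix of another cyclic path, which makes the divisibility hypotheses "$p\nmid o$" easy to fulfil.

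For part (i): if distinct cycles $C_1\neq C_2$ share a vertex $u$, root cyclic paths $\mu_1,\mu_2$ of $C_1,C_2$ at $u$; then $\mu_1\overset{\nod}{\Longrightarrow}\mu_1$ via $o=\mu_2$, so $\mu_1$ is a selfconnected quasi-cycle and $L_K(E)$ has exponential growth by Lemma \ref{lemexp}. Conversely, given a selfconnected quasi-cycle I may assume, by $*$-symmetry, that it is a real cyclic path $\mu$ at $v$; in a witness $\mu o\mu$ the word $o$ cannot contain a ghost edge (that would create a forbidden $e^*f$ at a turning point), so $o$ is a closed path of $E$ at $v$ with $\mu\nmid o$. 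A closed path at $v$ lying inside $E(\mu)$ is a power of $\mu$, hence $o$ leaves $E(\mu)$; following $o$ from the vertex where it first leaves $E(\mu)$ until it re-enters, and then closing up inside $E(\mu)$, produces a cyclic path distinct from that of $E(\mu)$ and passing through a vertex of $E(\mu)$. With Theorem \ref{thmm}(i) this proves part (i). The same argument shows that in the polynomial-growth case distinct cycles lie in distinct strongly connected components, so that reachability between cycles is a partial order — which I use below.

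For part (ii) assume polynomial growth, so that $\GKdim L_K(E)=d$ is the maximal length of a chain of quasi-cycles, and we must show $d=\max(2d_1-1,2d_2)$. As $\GKdim$ does not depend on the choice of the distinguished edges $e^v$, I choose these conveniently, taking $e^v$ to be an exit edge at every vertex $v$ that lies on a cycle and has an exit. The one-step relations among quasi-cycles then read, in the polynomial-growth case: real $\mu\Longrightarrow$ real $\nu$ iff $E(\mu)\neq E(\nu)$ and $E$ has a path from $E(\mu)$ to $E(\nu)$; dually $\mu^*\Longrightarrow\nu^*$ iff $E$ has a path from $E(\nu)$ to $E(\mu)$; a ghost quasi-cycle never $\Longrightarrow$'s a real one; the "turn on one cycle" $\mu\Longrightarrow\mu^*$ holds iff $E(\mu)$ has an exit; and the "turn between cycles" $\mu\Longrightarrow\sigma^*$ with $E(\mu)\neq E(\sigma)$ holds iff some vertex of $E$ is reachable both from $E(\mu)$ and from $E(\sigma)$. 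Hence a chain of quasi-cycles is a real block — which is a chain of cycles $A_1\to\cdots\to A_t$ — followed by a ghost block — which is a chain of cycles $B_1\to\cdots\to B_s$, read so that $B_s$ is the cycle of the first ghost quasi-cycle — joined by a single turn linking $A_t$ and $B_s$. A case analysis of the turn, using the partial-order fact, shows that in each case either both blocks are chains of cycles all of whose cycles have an exit, of lengths $t$ and $s$, whence $d=t+s\le 2\max(t,s)\le 2d_2$, or one of the two chains of cycles can be prolonged by at most one further cycle to force $d\le 2d_1-1$; thus $d\le\max(2d_1-1,2d_2)$. For the reverse inequality, a maximal chain of cycles $C_1\to\cdots\to C_{d_1}$ gives the quasi-cycle chain $\mu_1,\dots,\mu_{d_1-1},\mu_{d_1}^*,\dots,\mu_1^*$ of length $2d_1-1$, and a maximal chain $C_1\to\cdots\to C_{d_2}$ of cycles each having an exit gives $\mu_1,\dots,\mu_{d_2},\mu_{d_2}^*,\dots,\mu_1^*$ of length $2d_2$ (the turn $\mu_{d_2}\Longrightarrow\mu_{d_2}^*$ being legal because $C_{d_2}$ has an exit). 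Finally, in any chain of cycles every cycle but the last automatically has an exit, so a chain of cycles all of whose cycles have an exit is the same as a chain of cycles whose last cycle has an exit, identifying $d_2$ with the quantity in the statement.

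The step I expect to be the main obstacle is the precise determination of the two "turn" relations $\mu\Longrightarrow\mu^*$ and $\mu\Longrightarrow\sigma^*$: tracking exactly when the turning-point letter, or the word $o$ realising $\overset{\nod}{\Longrightarrow}$, avoids the pattern $e^v(e^v)^*$ is where the choice of the $e^v$'s and the presence or absence of exits come in, and it is also where one must check that the two blocks of a quasi-cycle chain really do reassemble into chains of cycles of the claimed lengths rather than merely overlapping near their tops.
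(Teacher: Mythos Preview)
Your proposal is correct and follows the same overall strategy as the paper: identify the quasi-cycles of the unweighted graph with the cyclic paths and their ghosts, then read off Corollary~\ref{corm} from Theorem~\ref{thmm}. The paper's own proof is much terser: for the converse in~(i) it invokes \cite[Lemma~4]{zel12} and an iterative shortening of the witness $o$, and for the inequality $d\le\max(2d_1-1,2d_2)$ in~(ii) it simply says ``it is easy to see'', deferring the constructions of the long chains to the proof of \cite[Theorem~5]{zel12}. Your argument is more self-contained: you prove the converse in~(i) by tracking where the witness path leaves $E(\mu)$, and you obtain the upper bound in~(ii) via an explicit real-block/ghost-block decomposition of a quasi-cycle chain together with a case analysis of the turn. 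Your device of choosing $e^v$ to be an exit edge whenever one exists is a genuine simplification not present in the paper; it is legitimate because $\GKdim$ (hence $d$, via Theorem~\ref{thmm}) is independent of that choice, and it makes the characterisation of $\mu\Longrightarrow\mu^*$ clean. One small point to tighten when you write it up: the statement ``real $\mu\Longrightarrow$ real $\nu$ iff $E(\mu)\neq E(\nu)$ and \dots'' is literally false (rotations of the same cycle can $\Longrightarrow$ one another), but this is harmless since in a chain the condition $\mu\not\approx\nu$ already forces $E(\mu)\neq E(\nu)$.
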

\begin{proof}
Let $(E,w)$ be the weighted graph such that $w \equiv 1$. Then $L_K(E)\cong L_K(E,w)$ (see Example \ref{exex1}). It is easy to see that $\{p,p^*\mid p\in \CP\}$ is the set of all quasi-cycles of $(E,w)$ (we identify $E$ with the directed graph $\hat E$ associated to $(E,w)$). We will show (i) first and then (ii).\\\\
(i) First suppose that $L_K(E)$ has polynomial growth. Then, by the previous theorem, there is no selfconnected quasi-cycle. Assume that there are two distinct cycles $C_1$ and $C_2$ with a common vertex $v$. Then there are $p,q\in \CP$ such that $E(p)=C_1$, $E(q)=C_2$, $s(p)=r(p)=s(q)=r(q)=v$ and $p\!\!\not| \, q$. By the previous paragraph, $p$ is a quasi-cycle. Further $p$ is selfconnected (since $pqp$ is a nod-path) and hence we arrived at a contradiction. Thus two distinct cycles do not have a common vertex.\\
Now suppose that two distinct cycles do not have a common vertex. Assume that there is a selfconnected quasi-cycle $p$. We only consider the case that $p\in \CP$, the case that $p^*\in \CP$ is similar. Let $o=x_1\dots x_n$ be a path such that $pop$ is a path and $p\!\!\not| \,o$. Since $o$ is a closed path, there is a subword $p'=x_i\dots x_j$ of $o$ (where $1\leq i\leq j\leq n$) such that $p'\in \CP$. It follows from \cite[Lemma 4]{zel12} that $E(p')=E(p)$. Assume that $i=1$. Then $s(p')=r(p)=s(p)$ which implies that $p'=p$. But that contradicts $p\!\!\not| \,o$. Hence $i>1$. Set $o':=x_1\dots x_{i-1}$. Then clearly $p\!\!\not| \,o'$, $po'p'$ is a path and $|o'|<|o|$. We see that we arrive at a contradiction after repeating this step a finite number of times. Hence there is no selfconnected quasi-cycle and thus, by the previous theorem, $L_K(E)$ has polynomial growth.\\
\\
(ii) Let $d$ be the maximal length of a chain of quasi-cycles, $d_1$ the maximal length of a chain of cycles and $d_2$ is the maximal length of a chain of cycles with an exit. We have to show that $d=\max(2d_1-1,2d_2)$. Let $C_1,\dots,C_{d_2}$ a chain of cycles with an exit. Choose $p_1, \dots , p_{d_2}\in \CP$ such that $E(p_i)=C_i$ for any $1\leq i\leq d_2$. Then $p_1,\dots,p_{d_2},p^*_{d_2}, \dots, p_1^*$ is a chain of quasi-cycles (see the proof of \cite[Theorem 5]{zel12}) and hence $d\geq 2d_2$. Let now $C_1,\dots,C_{d_1}$ be a chain of cycles. If it has an exit, then $d_1=d_2$ and we have $d\geq 2d_2=2d_1>2d_1-1$. Suppose now that $C_1,\dots,C_{d_1}$ has no exit. Choose $p_1, \dots , p_{d_1}\in \CP$ such that $E(p_i)=C_i$ for any $1\leq i\leq d_1$. Then $p_1,\dots,p_{d_1},p^*_{d_1-1}, \dots, p_1^*$ is a chain of quasi-cycles (see the proof of \cite[Theorem 5]{zel12}) and hence $d\geq 2d_1-1$. Thus we have shown that $d\geq \max(2d_1-1,2d_2)$. On the other hand it is easy to see that $d\leq \max(2d_1-1,2d_2)$ and hence we have $d=\max(2d_1-1,2d_2)$ as desired.
\end{proof}

\section{Examples}
Throughout this section $K$ denotes a field. We will consider only connected, irreducible weighted graphs (cf. \cite[Definitions 21,23]{hazrat-preusser}). While wLpas of reducible weighted graphs are isomorphic to Lpas (cf. \cite[Proposition 28]{hazrat-preusser}), it is an open question which of the wLpas of irreducible weighted graphs are isomorphic to Lpas.

In general it is not so easy to read off the quasi-cycles from a finite weigthed graph. But there is the following algorithm to find all the quasi-cycles: For any vertex $v$ list all the d-paths $x_1\dots x_n$ starting and ending at $v$ and having the property that $x_i\neq x_j$ for any $i\neq j$ (there are only finitely many of them). Now delete from that list any $p$ such that $p^2$ is not a nod-path. Next delete from the list any $p$ such that $p^2$ has a subword $q$ of length $|q|<|p|$ such that $q^2$ is a nod-path. The remaining d-paths on the list are precisely the quasi-cycles starting (and ending) at $v$.

First we consider two trivial examples which show that small changes in the weighted graph can change the GK dimension of its wLpa drastically.
\begin{example}\label{ex4.0}
Consider the weighted graph 
\[
(E,w):\quad\xymatrix@C+15pt{u&v\ar[l]_{~e,2}\ar[r]^{f}&x}.
\]
One checks easily that the length of a nod-path is bounded (the unique longest nod-path is $e_2^*f_1f_1^*e_2$). Hence there is no nod$^2$-path and therefore no quasi-cycle. Thus, by Theorem \ref{thmm}, $\GKdim L_K(E,w)=0$.
\end{example}
\begin{example}
Consider the weighted graph 
\[
(E,w):\quad\xymatrix@C+15pt{u&v\ar[l]_{~e,2}\ar[r]^{f,2}&x}.
\]
By Corollary \ref{corexp}, $L_K(E,w)$ has exponential growth and hence $\GKdim L_K(E,w)=\infty$.
\end{example}

Through the next example we obtain the GK dimensions of the Leavitt algebras $L_K(n,n+k)$.
\begin{example}
Let $n\geq 1$ and $k\geq 0$. Consider the weighted graph
\[
(E,w):\quad\xymatrix{
v \ar@{.}@(l,d) \ar@(ur,dr)^{e^{(1)},n} \ar@(r,d)^{e^{(2)},n} \ar@(dr,dl)^{e^{(3)},n} \ar@(l,u)^{e^{(n+k)},n}
}.
\]
As mentioned in Example \ref{wlpapp}, $L_K(E,w)$ is isomorphic to the Leavitt algebra $L_K(n,n+k)$. If $n>1$, then, by Corollary \ref{corexp}, $L_K(E,w)$ has exponential growth and hence $\GKdim L_K(E,w)=\infty$. If $n=1$ and $k=0$, then $\GKdim L_K(E,w)=1$ by Corollary \ref{corm}. If $n=1$ and $k>0$, then  $\GKdim L_K(E,w)=\infty$ by Corollary \ref{corm}.
\end{example}

Next we consider again the weighted graph from Example \ref{exqu}.
\begin{example}
Consider the weighted graph 
\[
\xymatrix@C+15pt{
u\ar[r]^{e,2}&  v\ar@/^1.7pc/[r]^{f}\ar@/_1.7pc/[r]_{g}  & x  
}.
\]
Then $p=e_2f_1g_1^*e_2^*$ is a selfconnected quasi-cycle (see Example \ref{exqu}) and hence, by Lemma \ref{lemexp}, $L_K(E,w)$ has exponential growth. Thus $\GKdim L_K(E,w)=\infty$. 
\end{example}

The next example shows, that for any positive integer $n$ there is a connected, irreducible weighted graph $(E,w)$ such that $\GKdim L_K(E,w)=n$.
\begin{example}
Let $n\in \N$. Consider the weighted graph 
\[
(E,w):\quad\xymatrix@C+15pt{
u&v\ar[l]_{e,2}\ar[r]^{f^{(1)}}& x_1\ar@(ul,ur)^{g^{(1)}}\ar[r]^{f^{(2)}}&x_2\ar@(ul,ur)^{g^{(2)}}\ar[r]^{f^{(3)}}&\dots \ar[r]^{f^{(n)}}&x_n\ar@(ul,ur)^{g^{(n)}}}.
\]
One checks easily that the only quasi-cycles are $p_i:=g^{(i)}_1$ and $p_i^*= (g^{(i)}_1)^*~ (1\leq i\leq n)$ and that they are not selfconnected. The longest chains of quasi-cycles are $p_1,\dots,p_{n-1},p_n, p_{n-1}^*,\dots,p_1^*$ and $p_1,\dots,p_{n-1},p_n^*, p_{n-1}^*,$ $\dots,p_1^*$. Hence $\GKdim L_K(E,w)= 2n-1$ by Theorem \ref{thmm}. Consider now the weighted graph
\[
(E,w):\quad\xymatrix@C+15pt{
u&v\ar[l]_{e,2}\ar[r]^{f^{(1)}}& x_1\ar@(ul,ur)^{g^{(1)}}\ar[r]^{f^{(2)}}&x_2\ar@(ul,ur)^{g^{(2)}}\ar[r]^{f^{(3)}}&\dots \ar[r]^{f^{(n)}}&x_n\ar@(ul,ur)^{g^{(n)}}\ar[r]^{f^{(n+1)}}&x_{n+1}}.
\]
One checks easily that the only quasi-cycles are $p_i:=g^{(i)}_1$ and  $p_i^*= (g^{(i)}_1)^*~ (1\leq i\leq n)$ and that they are not selfconnected. The longest chain of quasi-cycles is $p_1,\dots,p_n, p_{n}^*,\dots,p_1^*$ and hence $\GKdim L_K(E,w)= 2n$ by Theorem \ref{thmm}.
\end{example}

We finish this section with a last example. We use the following terminology: Let $x,y\in \hat E_d^1$. If $xy$ is a nod-path, then $y$ is called a {\it nod-successor of $x$}. Let $p=x_1\dots x_n$ be a quasi-cycle, $y\in \hat E_d^1$ and $1\leq i\leq n$. If $y$ is a nod-successor of $x_i$ which is not equal to $x_{i+1}$ if $i<n$ resp. to $x_{1}$ if $i=n$, then the nod-path $x_iy$ is a called a {\it nod-exit of $p$}. Let $[p]$ denote the $\approx$-equivalence class of $p$. If $q\in [p]$, then clearly $p$ and $q$ have the same nod-exits. Hence we can define a {\it nod-exit of $[p]$} to be a nod-exit of $p$.
\begin{example}
Consider the weighted graph 
\[
\xymatrix@C+15pt@R+15pt{
u\ar[rd]_{g}\ar[r]^{e,2}&  v\ar[d]^{f}  \\& \quad x\quad.  
}
\]
Applying the algorithm described in the second paragraph of this section one gets that the only $\approx$-equivalence classes of quasi-cycles are $[e_2f_1g_1^*]$ and $[g_1f_1^*e_2^*]$.
While $[e_2f_1g_1^*]$ has no nod-exit, $[g_1f_1^*e_2^*]$ has the nod-exits 
\[e_2^*e_1, ~e_2^*e_2,~f_1^*e_1^*,~ g_1g_1^*.\]
The only nod-successor of $e_1$ is $f_1$ and the only nod-successor of $e_1^*$ and $g_1^*$ is $e_2$. But $f_1$ and $e_2$ belong to $e_2f_1g_1^*$ which has no nod-exit. We leave it to the reader to conclude that there is no selfconnected quasi-cycle and the maximal length of a chain of quasi-cycles is $2$. Thus, by Theorem \ref{thmm}, we have $\GKdim L_K(E,w)=2$.
\end{example}.

\section{Finite-dimensional weighted Leavitt path algebras}
Throughout this section $K$ denotes a field and $(E,w)$ a weighted graph. We call a $K$-algebra {\it finite-dimensional} if it is finite-dimensional as a $K$-vector space. The goal of this section is to prove that if $L_K(E,w)$ is finite-dimensional, then it is isomorphic to a finite product of matrix rings over $K$.

We call $(E,w)$ {\it aquasicyclic} if there is no quasi-cycle.
\begin{lemma}\label{5.0}
$L_K(E,w)$ is finite-dimensional iff $(E,w)$ is finite and aquasicyclic.
\end{lemma}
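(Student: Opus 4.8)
The plan is to prove both directions using the nod-path basis from Theorem \ref{thmhp}, together with the growth dichotomy established in Theorem \ref{thmm} and its supporting lemmas. The statement is $L_K(E,w)$ finite-dimensional $\iff$ $(E,w)$ finite and aquasicyclic, so I would organize the argument around the fact that $\dim_K L_K(E,w)$ equals the number of nod-paths.

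For the direction ``$(E,w)$ finite and aquasicyclic $\Rightarrow$ $L_K(E,w)$ finite-dimensional'': assume $(E,w)$ is finite with no quasi-cycle. Since there are no quasi-cycles, every edge (real or ghost) lies in $E'$ (the set of real and ghost edges not appearing in a quasi-cycle), so every nod-path lies in $P'$. By Lemma \ref{lemfnt}, $|P'| < \infty$. Hence there are only finitely many nod-paths, and by Theorem \ref{thmhp} these form a basis of $L_K(E,w)$, so $\dim_K L_K(E,w) < \infty$.

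For the converse ``$L_K(E,w)$ finite-dimensional $\Rightarrow$ $(E,w)$ finite and aquasicyclic'': I would prove the contrapositive in two parts. First, if $(E,w)$ is infinite, then either $E^0$ or $\hat E_d^1$ is infinite (note $(E,w)$ is assumed row-finite, so $E^1$ infinite forces $E^0$ infinite); in either case the vertices $v\in E^0$, together with relation (i) of Definition \ref{defpa} forcing $vw=\delta_{vw}v$, give infinitely many linearly independent nod-paths of length $1$ (the distinct vertices, or the distinct edges $e_i$), so $\dim_K L_K(E,w) = \infty$. Second, suppose $(E,w)$ is finite but there is a quasi-cycle $p$. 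Then $p$ is a nod$^2$-path, so $p, p^2, p^3, \dots$ are all nod-paths; since the $x_i$ in $p = x_1\dots x_n$ are pairwise distinct by Remark \ref{remnod2}(a), the powers $p^m$ are pairwise distinct words. By Theorem \ref{thmhp} they are linearly independent in $L_K(E,w)$, so again $\dim_K L_K(E,w) = \infty$. Combining the two parts gives the contrapositive, hence the claim.

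I do not expect a serious obstacle here — everything reduces to counting nod-paths via Theorem \ref{thmhp}. The only point requiring a little care is making sure the ``infinite graph'' case genuinely produces infinitely many \emph{linearly independent} elements rather than merely infinitely many generators; using the vertices themselves (which are orthogonal idempotents by the relations in the path algebra, and are distinct nod-paths) handles this cleanly, so the argument should be short.
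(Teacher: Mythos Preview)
Your argument is correct. Both directions go through as you describe: in the forward direction, the absence of quasi-cycles means $E'=\hat E_d^1$, so Lemma~\ref{lemfnt} bounds the number of nod-paths of positive length (together with the finitely many vertices this gives a finite basis via Theorem~\ref{thmhp}); in the backward direction, an infinite graph forces $E^0$ infinite by row-finiteness, while a quasi-cycle $p$ yields infinitely many distinct nod-paths $p,p^2,p^3,\dots$ (the forbidden words all have length $2$, so $p^2$ being a nod-path forces every $p^m$ to be one), and Theorem~\ref{thmhp} makes them linearly independent.

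The paper takes a slightly different route: it invokes Theorem~\ref{thmm} together with the general fact that a finitely generated $K$-algebra is finite-dimensional if and only if its GK dimension is $0$. In effect, the paper packages your counting argument inside the growth computation already done in Theorem~\ref{thmm} (no quasi-cycle $\Leftrightarrow$ maximal chain length $d=0$ $\Leftrightarrow$ $\GKdim=0$). Your version is more elementary and self-contained, bypassing the GK-dimension detour by using only Theorem~\ref{thmhp} and Lemma~\ref{lemfnt}; the paper's version is shorter on the page because the work has already been absorbed into Theorem~\ref{thmm}. Two small cosmetic points: vertices are nod-paths of length $0$, not $1$, and strictly speaking vertices are not in $P'$ (which consists of nod-paths built from edges in $E'$), but since $E^0$ is finite this does not affect the count.
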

\begin{proof}
Follows from Theorems \ref{thmhp} and \ref{thmm} and the fact that a finitely generated $K$-algebra $A$ is finite-dimensional iff $\GKdim A=0$.
\end{proof}

Until right before the Finite Dimension Theorem \ref{thmm2}, $(E,w)$ is assumed to be finite and aquasicyclic. 
\begin{definition}[{\sc Tree}]
If $u,v\in E^0$ and there is a path $p$ in $E$ such that $s(p)=u$ and $r(p)=v$, then we write $u\geq v$. Clearly $\geq$ is a preorder on $E^0$. If $u\in E^0$ then $T(u):=\{v\in E^0 \ |  \ u\geq v\}$ is called the {\it tree of $u$}. If $X\subseteq E^0$, we define $T(X):=\bigcup\limits_{v\in X}T(v)$.
\end{definition}
\begin{definition}[{\sc Range weight forest}]\label{defn2}
A structured edge $e\in E^{1}$ is called {\it weighted} if $w(e)>1$. The subset of $E^1$ consisting of all weighted structured edges is denoted by $E^1_w$. The set $\RWF(E,w):=T(r(E^{1}_w))$ is called the {\it range weight forest of $(E,w)$}.
\end{definition}

We call $(E,w)$ {\it acyclic} if there is no cyclic path in $E$. We call two structured edges $e$ and $f$ {\it in line} if $e=f$ or $r(e)\geq s(f)$ or $r(e)\geq s(f)$.
\begin{lemma}\label{5.1}
$(E,w)$ is acyclic, any vertex $v$ emits at most one weighted structured edge, any vertex $v\in \RWF(E,w)$  emits at most one structured edge and $T(r(e))\cap T(r(f))=\emptyset$ for any $e, f \in E^{1}_w$ which are not in line.
\end{lemma}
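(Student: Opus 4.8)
The plan is to reduce all four assertions to a single principle: \emph{$(E,w)$ is aquasicyclic if and only if $\hat E_d$ admits no nod$^2$-path}. Indeed, a nod$^2$-path of minimal length is automatically a quasi-cycle --- this is the remark underlying the proof of Lemma~\ref{lemfnt} --- so I will assume in turn that each assertion fails and construct a nod$^2$-path. Two preliminary observations make the verifications routine. First, the forbidden words are exactly the $e^v_i(e^v_j)^*$ and the $e^*_1f_1$ (with $e,f\in s^{-1}(v)$), all of length $2$, so it suffices to inspect length-$2$ subwords, and such a subword is not forbidden whenever it is of type ``real, real'' or ``ghost, ghost'', or has the shape $a_ib_j^*$ with real edges $a\neq b$, or the shape $a_i^*b_j$ with real edges and $(i,j)\neq(1,1)$; in particular a d-path containing no ghost letter is automatically a nod-path. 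Secondly, for a path $\alpha=b_1\cdots b_m$ in $E$ I write $\alpha_+=(b_1)_1\cdots(b_m)_1$ (a path in $\hat E$) and $\alpha_-=(b_m)_1^*\cdots(b_1)_1^*$ (a path in $\hat E_d$), both empty when $m=0$.

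\emph{Acyclicity.} If $E$ had a cyclic path $c=a_1\cdots a_n$, then $c_+$ would be a d-path with no ghost letter, hence a nod-path, and $c_+^2$ would again be a d-path (legitimate since $r(a_n)=s(a_1)$) with no ghost letter, so $c_+$ would be a nod$^2$-path --- impossible. Hence $E$ is acyclic, every path in $E$ has pairwise distinct vertices, and $E$ has no loop. \emph{At most one weighted edge per vertex.} If a vertex $v$ emitted two distinct weighted structured edges, relabel one of them as $f$ with $f\neq e^v$; then $r(f)\neq v$, and $f_2f_2^*$ is a d-path $v\to v$ whose only nontrivial subword $f_2f_2^*$ is not forbidden (as $f\neq e^v$), so it is a nod-path, while $f_2f_2^*f_2f_2^*$ only adds the harmless subword $f_2^*f_2$; hence $f_2f_2^*$ is a nod$^2$-path --- impossible. (Alternatively use Corollary~\ref{corexp} and Lemma~\ref{5.0}.)

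\emph{At most one edge per vertex of $\RWF(E,w)$.} Suppose $v\in\RWF(E,w)$ emits distinct structured edges $f,g$, relabelled so that $g\neq e^v$. By the definition of $\RWF(E,w)$ there are a weighted edge $e_0$ and a path $\pi$ in $E$ from $r(e_0)$ to $v$. Then
\[
p=(e_0)_2\,\pi_+\,g_1\,g_1^*\,\pi_-\,(e_0)_2^*
\]
is a d-path closed at $s(e_0)$, all of whose length-$2$ subwords are ``real, real'' or ``ghost, ghost'' apart from $g_1g_1^*$ (not forbidden, since $g\neq e^v$), while $p^2$ adds only $(e_0)_2^*(e_0)_2$ (not forbidden, its ghost letter having index $2$). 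So $p$ is a nod$^2$-path --- a contradiction.

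\emph{Disjointness of the trees.} This is where the real work lies. Suppose $e,f\in E^1_w$ are not in line but $T(r(e))\cap T(r(f))\neq\emptyset$; pick a vertex $u$ in the intersection and paths $\sigma\colon r(e)\to u$ and $\tau\colon r(f)\to u$ with $|\sigma|+|\tau|$ minimal. Minimality forces $\sigma$ and $\tau$ to share no vertex except $u$, and in particular (using acyclicity) to end with distinct edges when both are nonempty. The candidate is the d-path
\[
p=e_2\,\sigma_+\,\tau_-\,f_2^*\,f_1\,\tau_+\,\sigma_-\,e_2^*,
\]
closed at $s(e)$, read with the obvious shortenings when $\sigma$ or $\tau$ is empty (when both are, $p=e_2f_2^*f_1e_2^*$). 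After discarding the ``real, real'', ``ghost, ghost'' subwords and the two ghost-index-$2$ subwords $f_2^*f_1$ and (in $p^2$) $e_2^*e_2$, the subwords still to be examined are: the junction of $\sigma_+$ with $\tau_-$ and the junction of $\tau_+$ with $\sigma_-$, both of the shape $a_1b_1^*$ with $a\neq b$ by the choice of $u$, hence not forbidden; and, in the degenerate cases, the two subwords coupling $e_2$ and $e_2^*$ with the last edge of $\tau$ (when $\sigma$ is empty), or $f_2^*$ and $f_1$ with the last edge of $\sigma$ (when $\tau$ is empty), or $e_2f_2^*$ and $f_1e_2^*$ with each other (when both are empty). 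Each subword of this last kind is forbidden only if, respectively, $e$ equals the last edge of $\tau$, or $f$ equals the last edge of $\sigma$, or $e=f$; but each such coincidence forces $r(f)\geq s(e)$, or $r(e)\geq s(f)$, or $e=f$ --- that is, $e$ and $f$ would be in line, contrary to hypothesis. Hence $p$ is a nod$^2$-path, the final contradiction. I expect the entire difficulty to reside in this last step: writing down a connecting closed d-path through $r(e)$, $u$ and $r(f)$, tracking its few real--ghost junctions, and recognising that the junctions which might be forbidden are precisely those ruled out by the ``in line'' hypothesis, together with the bookkeeping of the degenerate cases.
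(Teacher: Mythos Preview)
Your proof is correct and follows essentially the same approach as the paper: in each of the four parts you exhibit a nod$^2$-path (hence a quasi-cycle by minimality) under the negated hypothesis, and your explicit words match the paper's almost verbatim---the only cosmetic differences are that the paper writes $f_2^*f_2$ where you write $f_2^*f_1$ in the last part, and phrases your minimality argument on $|\sigma|+|\tau|$ as ``cutting off a common ending''. Your treatment of the degenerate cases and of the real--ghost junctions is in fact more explicit than the paper's.
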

\begin{proof}
That $(E,w)$ is acyclic is clear since any cyclic path in $\hat E$ is a quasi-cycle. Assume there is a vertex $v$ which emits two distinct weighted structured edges. The proof of Corollary \ref{corexp} shows that then there is a quasi-cycle and hence we have a contradiction.\\ 
Suppose now that there is a $v\in \RWF(E,w)$ such that $|s^{-1}(v)|\geq 2$. Choose an $e\in s^{-1}(v)\setminus \{e^v\}$. By the definition of $\RWF(E,w)$, there is a structured edge $f\in E^{1}_w$ and a path $p$ in $\hat E$ such that $\hat s(p)=r(f)$ and $\hat r(p)=v$. Then $f_2pe_1e_1^*p^*f_2^*$ (resp. $f_2e_1e_1^*f_2^*$ if $|p|=0$) is a nod$^2$-path. Since for any nod$^2$-path $q$ which is not a quasi-cycle there is a nod$^2$-path $q'$ such that $|q'|<|q|$, the existence of a quasi-cycle follows.\\
Now let $e, f\in E^{1}_w$ be not in line. Assume that there is a $v\in T(r(e))\cap T(r(f))$. Then there are paths $p=x_1\dots x_m$ and $q=y_1\dots y_n$ in $\hat E$ such that $\hat s(p)=r(e)$, $\hat s(q)=r(f)$ and $\hat r(p)=\hat r(q)=v$. W.l.o.g. assume that $m\geq n$. After cutting off a possible common ending of $p$ and $q$ we may assume that we are in one of the following three cases:\\
\\
\underline{case 1} Assume that $|p|,|q|\neq 0$ and $x_m\neq y_n$. Then $e_2pq^*f_2^*f_2qp^*e_2^*$ is a nod$^2$-path and hence there is a quasi-cycle.\\
\\
\underline{case 2} Assume that $|p|\neq 0$ and $|q|=0$. Clearly $x_m\neq f_i$ for any $1\leq i\leq w(f)$ since $e$ and $f$ are not in line. Hence $e_2pf_2^*f_2p^*e_2^*$ is a nod$^2$-path and hence there is a quasi-cycle.\\
\\
\underline{case 3} Assume that $|p|=|q|=0$. Since $e$ and $f$ are distinct, $e_2f_2^*f_2e_2^*$ is a nod$^2$-path and hence there is a quasi-cycle.
\end{proof}

We call an $e\in E^{1}_w$ {\it weighted structured edge of type A} if $s(e)$ emits only one structured edge (namely $e$) and {\it weighted structured edge of type B} otherwise. 
\begin{lemma}\label{5.2}
There is a finite, aquasicyclic weighted graph $(\tilde E,\tilde w)$ such that $L_K(\tilde E,\tilde w)\cong L_K(E,w)$, $(\tilde E,\tilde w)$ has at most as many weighted structured edges as $(E, w)$, all weighted structured edges in $(\tilde E, \tilde w)$ are of type $B$ and their ranges are sinks.
\end{lemma}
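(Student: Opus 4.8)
The plan is to reduce the weighted structured edges of type $A$ to type $B$ one at a time by a local surgery on the graph, showing at each step that the weighted Leavitt path algebra is unchanged up to isomorphism. So suppose $e\in E^1_w$ is a weighted structured edge of type $A$, i.e. $s(e)$ emits only $e$. By Lemma~\ref{5.1}, $(E,w)$ is acyclic, so the ``upstream'' part of the graph is a genuine tree; in particular $s(e)$ is not in any cyclic path and emitting only $e$ means the relations (i),(ii) of Definition~\ref{def3} at $v=s(e)$ read $\sum_{1\le i,j\le w(e)} e_ie_j^* = \delta_{ij}v$ and $\sum_{1\le i\le w(e)} e_i^*e_i = r(e)$. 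The first idea is that these relations let us ``slide'' the vertex $v=s(e)$ and the edge $e$ past $r(e)$: the $w(e)\times w(e)$ matrix $(e_ie_j^*)$ is a full matrix unit system with $v$ as the identity, so morally $v\cong$ ``$w(e)$ copies of $r(e)$ glued together,'' and one can transfer any weighted edge that had its range at $s(e)$, or any structured edge emitted further up, onto $r(e)$ instead, at the cost of possibly removing $e$ and $v$ and re-labelling. I would first handle the easy sub-case where $s(e)$ receives no edges and $r(e)$ is a sink: then $L_K(E,w)$ is (up to the part coming from $v,e,r(e)$) just a matrix algebra $M_{w(e)}(K)$ glued onto $L_K(E\setminus\{v,e\}, w)$ via $r(e)$, and one checks $L_K(E,w)\cong L_K(E',w')$ where $E'$ simply drops $v$ and $e$ and keeps $r(e)$ — the number of weighted edges goes down, so this case is absorbed by induction.

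The main work is the general case, where $s(e)$ may receive edges and $r(e)$ may not be a sink. Here I would not try to delete $e$ but to \emph{move its range}. Concretely: introduce a new sink $\bar v$, redirect $e$ so that $\tilde r(e)=\bar v$, and splice in at the old $r(e)$ a bunch of new edges of weight $1$ realising the identification ``$r(e)=\bar v$'' dictated by relation (ii). The key algebraic input is that in $L_K(E,w)$ the element $p:=e_1^*$ (together with $e_1,\dots,e_{w(e)}$) satisfies: $e_1^*e_1=r(e)$ and $e_1 e_1^* \le v$ is an idempotent, and $v = \sum_i e_ie_i^*$; so $v$ is Morita-equivalent data over $r(e)$. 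Using this one produces an explicit $K$-algebra isomorphism between $L_K(E,w)$ and $L_K(\tilde E,\tilde w)$ by sending the old generators $e_i,e_i^*$ and the vertices/edges attached at $s(e)$ or at $r(e)$ to appropriate words in the new algebra, and conversely; the verification is that both sets of images satisfy the defining relations of the other algebra (Definition~\ref{def3}) and that the two assignments are mutually inverse. I expect the bookkeeping — which edges get re-sourced or re-ranged, and checking relations (i),(ii) at each affected vertex in the new graph — to be the bulk of the argument, but no single verification is deep: it is the same ``CK1/CK2''-style manipulation used throughout wLpa theory.

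Two points require care, and I'd flag them as the potential obstacles. First, one must ensure the surgery keeps the graph \emph{aquasicyclic}: moving $r(e)$ to a fresh sink and adding only weight-$1$ edges cannot create a quasi-cycle (a quasi-cycle needs a weighted edge traversed in its ``second strand'' to return, cf. the proofs of Corollary~\ref{corexp} and Lemma~\ref{5.1}), and it cannot create a cyclic path since $\bar v$ is a sink and the new weight-$1$ edges point \emph{into} the old $T(r(e))$, which by acyclicity of $(E,w)$ contains no cycle through $r(e)$ — but this needs to be said explicitly, invoking Lemma~\ref{5.1}. Second, one must check the process \emph{terminates} and does not convert type-$B$ edges into type-$A$ edges elsewhere: since $(E,w)$ has finitely many weighted structured edges and by Lemma~\ref{5.1} no vertex in $\RWF(E,w)$ emits more than one structured edge, redirecting one type-$A$ edge $e$ affects only the vertex $s(e)$ (which drops out) and the new sink, so the set of weighted structured edges, viewed as abstract edges, only loses the ``type-$A$'' label on $e$ and gains nothing; hence after finitely many steps every weighted structured edge is of type $B$ with sink range, and the weighted-edge count never increases. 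Finally, one observes that after these reductions the ranges of the weighted edges are automatically sinks in the type-$B$ case too: if $r(e)$ emitted an edge it would lie in $\RWF(E,w)$ and by Lemma~\ref{5.1} emit only $e^{r(e)}$, and a short argument (again of the form in Lemma~\ref{5.1}) produces a quasi-cycle, contradiction — so the ``ranges are sinks'' clause comes for free from aquasicyclicity once the type is fixed. Assembling these gives the desired $(\tilde E,\tilde w)$.
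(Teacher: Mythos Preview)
Your inductive, one-edge-at-a-time approach differs from the paper's, which performs a single global surgery: it sets $Z := \RWF(E,w) \cup \{v \in E^0 \mid s^{-1}(v)=\{e\}\text{ for some }e\in E^1_w\}$ and \emph{reverses} every structured edge $e$ with $s(e)\in Z$, replacing it by $w(e)$ weight-$1$ edges from $r(e)$ to $s(e)$. This simultaneously eliminates all type-$A$ weighted edges (their sources lie in $Z$, so they become unweighted reversed edges) and forces the ranges of the surviving weighted edges to be sinks (those ranges lie in $\RWF\subseteq Z$, so every edge they emitted has been reversed away). Aquasicyclicity of the new graph is then read off from Lemma~\ref{5.0}, since the algebra is unchanged.

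The genuine gap in your proposal is the final step, where you claim that once all weighted edges are of type $B$, their ranges are \emph{automatically} sinks ``for free from aquasicyclicity'' via a Lemma~\ref{5.1}-style argument. This is false. Take vertices $v,u,x,y$ and edges $e\colon v\to u$ of weight $2$, $f\colon v\to x$ of weight $1$, $g\colon u\to y$ of weight $1$. The only weighted edge $e$ is of type $B$ (since $v$ also emits $f$), there are no type-$A$ weighted edges for your procedure to touch, and yet $r(e)=u$ emits $g$ and is not a sink. This graph is aquasicyclic: with $e^v=e$ and $e^u=g$, the words $e_ie_j^*$ and $g_1g_1^*$ are forbidden, and one checks that every closed d-path must contain one of these when squared. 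Lemma~\ref{5.1} tells you only that $u\in\RWF$ emits at \emph{most one} edge --- it does not force $u$ to be a sink, and no quasi-cycle arises from the single edge $g$. So treating only type-$A$ weighted edges is not enough; you must also reverse the (weight-$1$) edges emitted from vertices in $\RWF(E,w)$, which is precisely what the paper's inclusion $\RWF(E,w)\subseteq Z$ is for.

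A secondary issue: your description of the surgery on a type-$A$ edge is internally inconsistent. You write ``redirect $e$ so that $\tilde r(e)=\bar v$'' --- which keeps $s(e)$ emitting only $e$, hence $e$ remains type $A$ --- but later say ``$s(e)$ drops out''. The move that actually works, and the one the paper uses, is edge reversal: delete $e$ and add $w(e)$ weight-$1$ edges from $r(e)$ to $s(e)$, so that $s(e)$ becomes a sink and $e$ vanishes from $E^1_w$.
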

\begin{proof}
Set $Z:=\RWF(E,w)\cup\{v\in E^0\mid s^{-1}(v)=\{e\} \text{ for some }e\in E^{1}_w\}$.
Define a weighted graph $(\tilde E,\tilde w)$ by $\tilde E^0=E^0$, $\tilde E^1=\{e\mid e\in E^1,s(e)\not\in Z\}\cup \{e^{(1)},\dots,e^{w(e)}\mid e\in E^1,s(e)\in Z\}$, $\tilde s(e)=s(e)$, $\tilde r(e)=r(e)$ and $\tilde w(e)=w(e)$ if $s(e)\not\in Z$ and $\tilde s(e^{(i)})=r(e)$, $\tilde r(e^{(i)})=s(e)$ and $\tilde w(e^{(i)})=1$ if $s(e)\in Z$ and $1\leq i\leq w(e)$. One checks easily that $(\tilde E,\tilde w)$ has at most as many weighted structured edges as $(E, w)$, all weighted structured edges in $(\tilde E, \tilde w)$ are of type $B$ and their ranges are sinks. The proof that $L_K(\tilde E,\tilde w)\cong L_K(E,w)$ is very similar to the proof of \cite[Proposition 28]{hazrat-preusser} and therefore is omitted. That $(\tilde E,\tilde w)$ is finite and aquasicyclic follows from Lemma \ref{5.0}.
\end{proof}

\begin{example}\label{ex5.00}
Suppose $(E,w)$ is the finite, aquasicyclic weighted graph 
\[
\xymatrix@C+15pt{a&u\ar[l]_{k}&v\ar[l]_{e,2}\ar@/^1.7pc/[r]^{f}\ar@/_1.7pc/[r]_{g}&x\ar[r]^{h}&y\ar[r]^{i,2}&b\ar[r]^{j}&c}.
\]
The weighted structured edge $i$ is of type A and the weighted structured edge $e$ is of type B. Clearly $\RWF(E,w)=\{a,u,b,c\}$. Let $Z$ be defined as in the proof of the previous lemma. Then $Z=\{a,u,y,b,c\}$. Let $(\tilde E, \tilde w)$ be the weighted graph
\[
\xymatrix@C+15pt{ a \ar[r]^{k}& u& v\ar[l]_{e,2}\ar@/^1.7pc/[r]^{f}\ar@/_1.7pc/[r]_{g}& x\ar[r]^{h}& y& b\ar@/^1.7pc/[l]^{i^{(2)}}\ar@/_1.7pc/[l]_{i^{(1)}}& c\ar[l]_{j}}.
\]
Then $L_K(E,w)\cong L_K(\tilde E, \tilde w)$. In $(\tilde E,\tilde w)$ there is only one weighted structured edge, namely $e$, and it is of type $B$. Further $u=\tilde r(e)$ is a sink.
\end{example}

The next goal is to remove also the weighted structured edges of type B without changing the wLpa, so that eventually one arrives at an unweighted graph (i.e. at a weighted graph $(E',w')$ such that $w'\equiv 1$). Recall that a subset $H\subseteq E^0$ is called {\it hereditary} if $u\geq v$ where $u\in H$ and $v\in E^0$ implies $v\in H$. 
\begin{definition}[{\sc Weighted subgraph defined by hereditary vertex set}]
Let $H\subseteq E^0$ be a hereditary subset. Set $E_H^0:=H$, $E_H^1:=\{e\in E^1\mid s(e)\in H\}$, $r_H:=r|_{E_H^1}$, $s_H=s|_{E_H^1}$ and $w_H:=w|_{E_H^1}$. Then $E_H:=(E_H^0,E_H^1,s_H,r_H)$ is a directed graph and $(E_H,w_H)$ a weighted graph. We call $(E_H,w_H)$ the {\it weighted subgraph of $(E,w)$ defined by $H$}.   
\end{definition}

\begin{lemma}\label{5.3}
Suppose that all weighted structured edges in $(E, w)$ are of type $B$ and their ranges are sinks. Let $v\in E^0$ be a vertex such that $v$ is the only element of $T(v)$ which emits a weighted structured edge. Then there is an unweighted graph $(E',w')$ such that $L_K(E_{T(v)}, w_{T(v)})\cong L_K(E',w')$ via an isomorphism which maps vertices to sums of distinct vertices.
\end{lemma}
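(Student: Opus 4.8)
The plan is to absorb the unique weighted edge of $E_{T(v)}$ into an unweighted configuration by a local surgery at $v$ and at the range of that edge, and then to exhibit the resulting isomorphism explicitly on generators. \emph{Set-up.} Since $v$ is the only vertex of $T(v)$ emitting a weighted structured edge, there is exactly one such edge, say $e$, of weight $m\ge 2$, and we may take $e=e^{v}$. Then $v$ emits $e$ together with some weight-$1$ structured edges $f^{(1)},\dots ,f^{(k)}$ (with $k\ge 1$, since $e$ is of type $B$), $r(e)$ is a sink, every other vertex of $T(v)$ is a sink or emits only weight-$1$ edges, and $(E_{T(v)},w_{T(v)})$ is again finite and aquasicyclic. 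Unwinding Definition \ref{def3}, the relations of $A:=L_{K}(E_{T(v)},w_{T(v)})$ at $v$ are
\[
e_{1}e_{1}^{*}+\textstyle\sum_{l}f^{(l)}_{1}(f^{(l)}_{1})^{*}=v,\quad e_{i}e_{i}^{*}=v\ (2\le i\le m),\quad e_{i}e_{j}^{*}=0\ (i\neq j),\quad \textstyle\sum_{i=1}^{m}e_{i}^{*}e_{i}=r(e),
\]
together with $e_{1}^{*}f^{(l)}_{1}=0=(f^{(l)}_{1})^{*}e_{1}$ and $(f^{(l)}_{1})^{*}f^{(j)}_{1}=\delta_{lj}r(f^{(l)})$; at every other vertex the relations are the ordinary (unweighted) Cuntz--Krieger relations.

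\emph{The graph $E'$.} One first checks, as in the proof of Lemma \ref{5.1} by exhibiting a quasi-cycle, that aquasicyclicity forces the sink $r(e)$ to receive no structured edge other than $e$: if $P$ were a path from $v$ to $r(e)$ not through $e$, then $e_{i}P^{*}$ (with $e_{i}$ a layer of $e$, $i\ge 2$, and $P^{*}$ the word of ghost edges along $P$) would be a quasi-cycle. Now form $E'$ from $E_{T(v)}$ by deleting $r(e)$, adjoining new vertices $p,z_{2},\dots ,z_{m}$, replacing $e$ by a single unweighted edge $\bar e\colon v\to p$, and adding new edges $\bar g_{i}\colon z_{i}\to v$ for $2\le i\le m$, keeping all other vertices and edges. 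Then $(E',w')$ with $w'\equiv 1$ is finite and acyclic --- the $z_{i}$ receive no edge, $p$ is a sink, no edge of $E_{T(v)}$ points into $v$, and what remains is the acyclic subgraph on $T(v)\setminus\{v,r(e)\}$ --- hence aquasicyclic.

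\emph{The isomorphism.} Define $\phi\colon A\to L_{K}(E')$ on generators by $\phi(r(e))=p+z_{2}+\dots +z_{m}$, $\phi(v')=v'$ for every other vertex $v'$, $\phi(e_{1})=\bar e$, $\phi(e_{i})=\bar g_{i}^{*}$ for $2\le i\le m$, $\phi$ the identity on every generator arising from an edge $\neq e$, and $\phi(x^{*})=\phi(x)^{*}$. Each defining relation of $A$ then maps to a consequence of the relations of $L_{K}(E')$: for instance $e_{i}e_{i}^{*}=v$ becomes $\bar g_{i}^{*}\bar g_{i}=r(\bar g_{i})=v$, the relation $\sum_{i}e_{i}^{*}e_{i}=r(e)$ becomes $\bar e^{*}\bar e+\sum_{i\ge 2}\bar g_{i}\bar g_{i}^{*}=p+z_{2}+\dots +z_{m}$, the relation $e_{1}e_{1}^{*}+\sum_{l}f^{(l)}_{1}(f^{(l)}_{1})^{*}=v$ is relation (i) at $v$ in $E'$, the relations $e_{i}e_{j}^{*}=0$ and $e_{1}^{*}f^{(l)}_{1}=0=(f^{(l)}_{1})^{*}e_{1}$ hold because the relevant composites in $E'$ are between edges with incompatible source/range or between distinct edges out of $v$, and the relations at vertices $\neq v$ are untouched. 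In the reverse direction define $\psi\colon L_{K}(E')\to A$ by $\psi(p)=e_{1}^{*}e_{1}$, $\psi(z_{i})=e_{i}^{*}e_{i}$, $\psi(v')=v'$ otherwise, $\psi(\bar e)=e_{1}$, $\psi(\bar g_{i})=e_{i}^{*}$, $\psi$ the identity on surviving edges, and $\psi(y^{*})=\psi(y)^{*}$; one checks likewise that $\psi$ respects the relations of $L_{K}(E')$ (at each $z_{i}$ using $e_{i}e_{i}^{*}=v$, at $v$ using the displayed relations of $A$). Since $\psi\phi$ and $\phi\psi$ fix every generator, $\phi$ is an isomorphism, and by construction it maps $r(e)$ to a sum of $m$ distinct vertices and every other vertex to itself.

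\emph{Main obstacle.} The real work is twofold: the preliminary claim that $r(e)$ receives only $e$ --- this is what keeps $E'$ acyclic and is precisely what fails without the aquasicyclicity hypothesis --- and the routine but lengthy verification that $\phi$ and $\psi$ are well defined, carried out relation family by relation family. That no extra elements collapse under $\phi$ is automatic once $\psi$ is available; the mutual-inverse property rests on the orthogonality of the idempotents $e_{1}^{*}e_{1},\dots ,e_{m}^{*}e_{m}$ sitting under the sink $r(e)$ and on the nod-path basis of $A$ from Theorem \ref{thmhp}.
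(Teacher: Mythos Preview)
Your argument is correct, and it takes a genuinely different and cleaner route than the paper's proof.

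The paper builds an elaborate unweighted graph with several layers of auxiliary vertices $u_i,\,u_{ij},\,v_{ij}$ and chains of edges $\alpha^{(i)},\beta^{(ij)},\gamma^{(ij)}$, and then writes down long explicit formulas for two mutually inverse homomorphisms. Your construction instead hinges on a single structural observation that the paper does not isolate: under the standing hypotheses, the sink $r(e)$ receives only the edge $e$ (the nod$^2$-path $e_iP^*$ you exhibit does the job, via the reduction in Lemma~\ref{5.1}). This lets you perform a minimal surgery---replace $r(e)$ by a new sink $p$ plus sources $z_2,\dots,z_m$, send $e_1\mapsto\bar e$ and $e_i\mapsto\bar g_i^{*}$---because for $i\ge 2$ the relation $e_ie_i^{*}=v$ makes $e_i$ behave exactly like the ghost of an unweighted edge into $v$. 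The verification that $\phi$ and $\psi$ are inverse is then essentially a one-line check per relation, and the ``vertices to sums of distinct vertices'' requirement is immediate from $\phi(r(e))=p+z_2+\dots+z_m$.

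What your approach buys is brevity and transparency: the resulting graph is smaller (in Example~\ref{ex5.0} you get four vertices instead of five, and one still obtains $M_3(K)\times M_3(K)$), and the isomorphism is almost tautological once one sees why $e_i$ for $i\ge 2$ should be modelled as a ghost edge. The paper's construction does not rely on the claim that $r(e)$ receives only $e$ and instead reroutes everything through the trees $T(x_i)$; this makes it more uniform in spirit with the graph moves elsewhere in the paper, at the cost of considerably heavier bookkeeping.
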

\begin{proof}
Clearly there is an integer $k\geq 2$, integers $m, n_1,\dots,n_m\geq 1$, a vertex $u\in E^0\setminus\{v\}$, pairwise distinct vertices $x_1,\dots,x_m\in E^0\setminus\{v\}$ and pairwise distinct structured edges $e, f^{(ij)}\in E^1~(1\leq i \leq m, 1\leq j\leq n_i)$ such that $s^{-1}(v)=\{e, f^{(ij)}\mid 1\leq i \leq m, 1\leq j\leq n_i\}$, $r(e)=u$, $r(f^{(ij)})=x_i$, $w(e)=k$ and $w(f^{(ij)})=1$. Set $X:=\{x_1,\dots,x_m\}$. It is easy to see that $u,v\not\in T(X)$ (otherwise there would be a quasi-cycle). Define an unweighted graph $(E',w')$ by 
\begin{align*}
(E')^0:=&\{u_i~(1\leq i\leq k),u_{ij}~(1\leq i\leq m, 1\leq j \leq (k-1)n_i),\\
&v, v_{ij}~(1\leq i\leq m, 2\leq j \leq n_i),x_i~(1\leq i \leq m), y~(y\in T(X)\setminus X)\},\\
(E')^1:=&\{\alpha^{(i)}~(1\leq i\leq k),\beta^{(ij)}~(1\leq i\leq m, 1\leq j \leq n_i),\\
& \gamma^{(ij)}~(1\leq i\leq m, 1\leq j \leq (k-1)n_i), g~(g\in E^1, s(g)\in T(X))\},\\
s'(\alpha^{(1)})=&v,~r'(\alpha^{(1)})=u_1, ~s'(\alpha^{(i)})=u_{i-1},~r'(\alpha^{(i)})=u_i~(i\geq 2)\\
s'(\beta^{(i1)})=&v,~r'(\beta^{(i1)})=v_{i2},~ s'(\beta^{(ij)})=v_{ij},~r'(\beta^{(ij)})=v_{i,j+1}~(1<j<n_i),~s'(\beta^{(in_i)})=v_{in_i},~r'(\beta^{(in_i)})=x_{i},\\
s'(\gamma^{(i1)})=&x_i,~r'(\gamma^{(i1)})=u_{i1}, ~s'(\gamma^{(ij)})=u_{i,j-1},~r'(\gamma^{(ij)})=u_{ij}~(j\geq 2)\\
s'(g)=&s(g),~r'(g)=r(g)~(g\in E^1, s(g)\in T(X)\setminus X), ~s'(g)=u_{i,(k-1)n_i},~r'(g)=r(g)~(g\in E^1, s(g)=x_i).
\end{align*}
Define an algebra homomorphisms $\phi:L_K(E_{T(v)}, w_{T(v)})\rightarrow L_K(E',w')$ by 
\begin{align*}
\phi(u)&=\sum\limits_{1\leq i \leq k}u_i+\sum\limits_{\substack{1\leq i\leq m,\\1\leq j\leq (k-1)n_i}}u_{ij},\\
\phi(v)&=v+\sum\limits_{\substack{1\leq i\leq m,\\2\leq j\leq n_i}}v_{ij},\\
\phi(x_i)&=x_i~(1\leq i\leq m),\\
\phi(y)&=y~(y\in T(X)\setminus X),\\
\phi(e_1)&=\alpha_1^{(1)},~\phi(e_1^*)=(\alpha_1^{(1)})^*,\\
\phi(e_l)&=\alpha_1^{(1)}\dots \alpha_1^{(i)}+\sum\limits_{\substack{1\leq i\leq m,\\1\leq j\leq n_i}}\beta_1^{(ij)}\dots \beta_1^{(in_i)}\gamma_1^{(i1)}\dots \gamma_1^{(i,(l-2)n_i+j)},\\
\phi(e_l^*)&=(\alpha_1^{(i)})^*\dots (\alpha_1^{(1)})^*+\sum\limits_{\substack{1\leq i\leq m,\\1\leq j\leq n_i}}(\gamma_1^{(i,(l-2)n_i+j)})^*\dots (\gamma_1^{(i1)})^*(\beta_1^{(in_i)})^*\dots (\beta_1^{(ij)})^*~(2\leq l\leq m),\\
\phi(f_1^{(ij)})&=\beta_1^{(ij)}\dots \beta_1^{(in_i)},~\phi((f_1^{(ij)})^*)=(\beta_1^{(in_i)})^*\dots (\beta_1^{(ij)})^*~(1\leq i\leq m,1\leq j \leq n_i),\\
\phi(g_1)&=g_1,~\phi(g_1^*)=g_1^*~(g\in E^1, s(g)\in T(X)\setminus X),\\
\phi(g_1)&=\gamma_1^{(i1)}\dots \gamma_1^{(i,(k-1)n_i)}g_1~,\phi(g_1^*)=g_1^*(\gamma_1^{(i,(k-1)n_i)})^*\dots (\gamma_1^{(i1)})^*~(g\in E^1, s(g)=x_i)
\end{align*}
and an algebra homomorphism $\psi:L_K(E',w')\rightarrow L_K(E_{T(v)}, w_{T(v)})$ by
\begin{align*}
\psi(u_i)&=e_i^*e_1e_1^*e_i~(1\leq i\leq k),\\
\psi(u_{i,(j-2)n_i+l})&=e_j^*f_1^{(il)}(f_1^{(il)})^*e_j~(1\leq i\leq m,2\leq j \leq k,1\leq l \leq n_i),\\
\psi(v)&=e_1e_1^*+\sum\limits_{1\leq i\leq m}f_1^{(i1)}(f_1^{(i1)})^*,\\
\psi(v_{ij})&=f_1^{(ij)}(f_1^{(ij)})^*~(1\leq i\leq m,2\leq j\leq n_i),\\
\psi(x_i)&=x_i~(1\leq i\leq m),\\
\psi(y)&=y~(y\in T(X)\setminus X),\\
\psi(\alpha_1^{(1)})&=e_1,~\psi((\alpha^{(1)})^*)=e_1^*,\\
\psi(\alpha_1^{(i)})&=e_{i-1}^*e_1e_1^*e_i,~\psi((\alpha_1^{(i)})^*)=e_{i}^*e_1e_1^*e_{i-1}~(2\leq i\leq k),\\
\psi(\beta_1^{(ij)})&=f_1^{(ij)}(f_1^{(i,j+1)})^*,~\psi((\beta_1^{(ij)})^*)=f_1^{(i,j+1)}(f_1^{(ij)})^*~(1\leq i\leq m, 1\leq j <n_i),\\
\psi(\beta_1^{(in_i)})&=f_1^{(in_i)},~\psi((\beta_1^{(in_i)})^*)=(f_1^{(in_i)})^*~(1\leq i\leq m),\\
\psi(\gamma_1^{(i1)})&=(f^{(i1)}_1)^*e_2,~\psi((\gamma^{(i1)})^*)=e_2^*f^{(i1)}_1~(1\leq i\leq m),\\
\psi(\gamma_1^{(i(j-2)n_i+1)})&=e_{j-1}^*f^{(in_i)}_1(f^{(i1)}_1)^*e_j,~\psi((\gamma_1^{(i,(j-2)n_i+l)})^*)=e_j^*f^{(i1)}_1(f^{(in_i)}_1)^*e_{j-1},~(1\leq i\leq m,3\leq j \leq k),\\
\psi(\gamma_1^{(i,(j-2)n_i+l)})&=e_j^*f^{(i,l-1)}_1(f^{(il)}_1)^*e_j,~\psi((\gamma_1^{(i,(j-2)n_i+l)})^*)=e_j^*f^{(il)}_1(f^{(i,l-1)}_1)^*e_j,~(1\leq i\leq m,2\leq j \leq k,\\
&\hspace{12.7cm} 2\leq l \leq n_i),\\
\psi(g_1)&=g_1,~\psi(g_1^*)=g_1^*~(g\in (E')^1, s'(g)\in T(X)\setminus X),\\
\psi(g_1)&=e^*_kf_1^{(in_i)}g_1,~\psi(g_1^*)=g_1^*(f_1^{(in_i)})^*e_k~(g\in (E')^1, s'(g)=u_{i,(k-1)n_i}).
\end{align*}
It follows from the universal properties of $L_K(E_{T(v)}, w_{T(v)})$ and $L_K(E', w')$ that $\phi$ and $\psi$ are well defined. One checks easily that $\phi\circ \psi=id_{L_K(E',w')}$ and $\psi\circ \phi=id_{L_K(E_{T(v)}, w_{T(v)})}$. Thus $L_K(E_{T(v)}, w_{T(v)})\cong L_K(E', w')$.
\end{proof}
\begin{example}\label{ex5.0}
Suppose $(E,w)$ is the weighted graph 
\[
\xymatrix@C+15pt{ u& v\ar[l]_{e,2}\ar[r]^{f}& x}
\]
from Example \ref{ex4.0}.
Clearly $(E,w)=(E_{T(v)}, w_{T(v)})$. Let $(E',w')$ be the unweighted graph
\[
\xymatrix@C+15pt{ u_2&u_1\ar[l]_{\alpha^{(2)}}&v\ar[l]_{\alpha^{(1)}}\ar[r]^{\beta^{(11)}}&x\ar[r]^{\gamma^{(11)}}&u_{11}}.
\]
Then $L_K(E, w)\cong L_K(E',w')$ by the previous lemma.
\end{example}
\begin{example}\label{ex5.1}
Suppose $(E,w)$ is the weighted graph 
\[
\xymatrix@C+15pt{ a \ar[r]^{k}& u& v\ar[l]_{e,2}\ar@/^1.7pc/[r]^{f}\ar@/_1.7pc/[r]_{g}& x\ar[r]^{h}& y& b\ar@/^1.7pc/[l]^{i^{(2)}}\ar@/_1.7pc/[l]_{i^{(1)}}& c\ar[l]_{j}}.
\]
Then $(E_{T(v)}, w_{T(v)})$ is the weighted graph
\[
\xymatrix@C+15pt{u& v\ar[l]_{e,2}\ar@/^1.7pc/[r]^{f}\ar@/_1.7pc/[r]_{g}& x\ar[r]^{h}& y}.
\]
Let $(E',w')$ be the unweighted graph
\[
\xymatrix@C+15pt{ u_2&u_1\ar[l]_{\alpha^{(2)}}&v\ar[l]_{\alpha^{(1)}}\ar[r]^{\beta^{(11)}}&v_{12}\ar[r]^{\beta^{(12)}}&x\ar[r]^{\gamma^{(11)}}&u_{11}\ar[r]^{\gamma^{(12)}}&u_{12}\ar[r]^{h}&y}.
\]
Then $L_K(E_{T(v)}, w_{T(v)})\cong L_K(E',w')$ by the previous lemma..
\end{example}

We want to show that we can "replace" the subgraph $(E_{T(v)}, w_{T(v)})$ in the previous lemma by the unweighted graph $(E', w')$ within $(E,w)$ without changing the wLpa. In order to do that we need some definitions.
\begin{definition}[{\sc Weighted graph homomorphism}]
Let $(\tilde E,\tilde w)$ be a weighted graph. A morphism $f: (E,w) \rightarrow (\tilde E,\tilde w)$ consists of maps $f^0: E^0 \rightarrow \tilde E^0$ and $f^{1}: E^{1} \rightarrow \tilde E^{1}$ such that $\tilde r(f^{1}(e)) = f^0(r(e))$, $\tilde s(f^{1}(e)) = f^0(s(e))$ and $\tilde w(f^{1}(e))=w(e)$ for any $e \in E^{1}$.
\end{definition}
\begin{definition}[{\sc Complete weighted subgraph}]
A {\it weighted subgraph of $(E,w)$} is a weighted graph $(\tilde E,\tilde w)$ where $\tilde E^0\subseteq E^0$, $\tilde E^1\subseteq E^1$, $\tilde s=s|_{\tilde E^1}$, $\tilde r=r|_{\tilde E^1}$ and $\tilde w=w|_{\tilde E^1}$. A weighted subgraph $(\tilde E,\tilde w)$ of $(E,w)$ is called {\it complete} if $\tilde s^{-1}(v)=s^{-1}(v)$ for any $v\in \tilde E^0_{\reg}$.
\end{definition}
\begin{lemma}\label{lememb}
Let $(\tilde E,\tilde w)$ denote a complete weighted subgraph of $(E,w)$. Then the canonical graph monomorphism $(\tilde E,\tilde w)\rightarrow (E,w)$ induces an algebra monomorphism $L_K(\tilde E,\tilde w)\rightarrow L_K(E,w)$.
\end{lemma}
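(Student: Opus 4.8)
The plan is to produce the homomorphism $\iota\colon L_K(\tilde E,\tilde w)\to L_K(E,w)$ via the universal property of $L_K(\tilde E,\tilde w)$ as a quotient of a path algebra, and then deduce injectivity by tracking the nod-path bases furnished by Theorem \ref{thmhp}.

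First I would build $\iota$. The set inclusions $\hat{\tilde E}_d^0\subseteq\hat E_d^0$ and $\hat{\tilde E}_d^1\subseteq\hat E_d^1$ (the second following from $\tilde E^1\subseteq E^1$ and $\tilde w=w|_{\tilde E^1}$) induce a homomorphism of path algebras $P_K(\hat{\tilde E}_d)\to P_K(\hat E_d)$, and composing with the canonical surjection $P_K(\hat E_d)\to L_K(E,w)$ gives a homomorphism $\varphi\colon P_K(\hat{\tilde E}_d)\to L_K(E,w)$. I would then check that $\varphi$ kills the defining ideal of $L_K(\tilde E,\tilde w)$, and this is exactly where completeness is used: if $v\in\tilde E^0_{\reg}$ then $\tilde s^{-1}(v)=s^{-1}(v)$, so in particular $s^{-1}(v)\neq\emptyset$, giving $v\in E^0_{\reg}$ and $\tilde w(v)=w(v)$; hence each relation of type (i), resp. (ii), of $L_K(\tilde E,\tilde w)$ at $v$ is carried by $\varphi$ onto the corresponding relation of $L_K(E,w)$ at $v$, which is zero (the conventions ``$e_i=0$ whenever $i>w(e)$'' match because $\tilde w=w|_{\tilde E^1}$). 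Thus $\varphi$ factors as $\iota$, and by construction $\iota$ sends the generator $v$, $e_i$, or $e_i^*$ of $L_K(\tilde E,\tilde w)$ to the generator of the same name in $L_K(E,w)$; in particular it sends a nod-path $p$ of $(\tilde E,\tilde w)$, read as a word in these generators, to the word $p$ computed in $L_K(E,w)$.

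For injectivity I would first choose the distinguished edges compatibly: for each $v\in E^0_{\reg}$ pick $e^v\in s^{-1}(v)$ with $w(e^v)=w(v)$, arranging (using completeness) that whenever $v\in\tilde E^0_{\reg}$ the chosen $e^v$ lies in $\tilde s^{-1}(v)$, so that the same edge serves as the distinguished edge of $v$ in both $(\tilde E,\tilde w)$ and $(E,w)$. With this choice I would show that the forbidden words of $(\tilde E,\tilde w)$ are exactly the forbidden words of $(E,w)$ that happen to be words over the alphabet $\hat{\tilde E}_d^1$: a forbidden word of $(\tilde E,\tilde w)$ is clearly forbidden for $(E,w)$ since $\tilde E^0_{\reg}\subseteq E^0_{\reg}$, $\tilde w(v)=w(v)$, $\tilde s^{-1}(v)=s^{-1}(v)$ and the distinguished edges coincide; conversely a forbidden word of $(E,w)$ over $\hat{\tilde E}_d^1$ only involves structured edges lying in $\tilde E^1$, so its common source vertex $v$ emits an edge of $\tilde E$ and is therefore regular in $\tilde E$, whence completeness makes that word forbidden for $(\tilde E,\tilde w)$. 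Consequently a d-path over $\hat{\tilde E}_d^1$ is a nod-path of $(\tilde E,\tilde w)$ if and only if it is a nod-path of $(E,w)$, so $\iota$ maps the nod-path basis of $L_K(\tilde E,\tilde w)$ bijectively onto a subset of the nod-path basis of $L_K(E,w)$. Since that subset is linearly independent in $L_K(E,w)$, the map $\iota$ is injective.

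The only point requiring care is the converse half of the forbidden-word comparison, i.e. ruling out that some nod-path of $(\tilde E,\tilde w)$ develops a forbidden subword once it is viewed inside the larger double graph $\hat E_d$; the completeness condition $\tilde s^{-1}(v)=s^{-1}(v)$ for regular $v$, together with the compatible choice of distinguished edges, is precisely what guarantees that the notion of ``forbidden'', hence of ``nod-path'', is unchanged on the shared alphabet. Everything else is a routine verification of relations and a direct appeal to Theorem \ref{thmhp}.
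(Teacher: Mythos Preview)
Your proposal is correct and follows essentially the same approach as the paper: existence of the homomorphism via the universal property, and injectivity by observing that nod-paths of $(\tilde E,\tilde w)$ are sent to nod-paths of $(E,w)$ and invoking Theorem~\ref{thmhp}. The paper's proof is two sentences and suppresses exactly the details you spell out---in particular the compatible choice of the distinguished edges $e^v$ and the check that the forbidden words agree on the shared alphabet---so your write-up simply makes explicit what the paper leaves implicit.
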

\begin{proof}
The existence of an algebra homomorphism $L_K(\tilde E,\tilde w)\rightarrow L_K(E,w)$ follows from the universal property of $L_K(E,w)$. That it is injective follows from Theorem \ref{thmhp} since nod-paths in $(\tilde E,\tilde w)$ are mapped to nod-paths in $(E,w)$.
\end{proof}
\begin{definition}[{\sc Replacement graph}]
Let $H\subseteq E^0$ a hereditary subset, $(E',w')$ be a weighted graph and $\phi:L_K(E_H,w_H)\rightarrow L_K(E',w')$ an isomorphism which maps vertices to sums of distinct vertices, i.e. for any $v\in H$ there are distinct $u'_{v,1},\dots,u'_{v,n_v}\in (E')^0$ such that $\phi(v)=u'_{v,1}+\dots+u'_{v,n_v}$. The weighted graph $(\tilde E, \tilde w)$ defined by   
\begin{align*}
&\tilde E^0=E^0\setminus H\sqcup (E')^0,\\
&\tilde E^{1}=\{e\mid e\in E^{1}, s(e),r(e)\in E^0\setminus H\}\\
&\quad\quad\sqcup\{e^{(1)},\dots,e^{(n_{r(\alpha)})}\mid e\in E^{1}, s(e)\in E^0\setminus H,r(e)\in H\}\\
&\quad\quad\sqcup (E')^{1},\\
&\tilde s(e)=s(e),\tilde r(e)=r(e),\tilde w(e)= w(e)\quad(e\in E^{1}, s(e),r(e)\in E^0\setminus H),\\
&\tilde s(e^{(j)})=s(e),\tilde r(e^{(j)})=u'_{r(\alpha),j},\tilde w(e^{(j)})= w(e)\quad(e\in E^{1}, s(e)\in E^0\setminus H,r(e)\in H ,1\leq j\leq n_{r(e)}),\\
&\tilde s(e')=s'(e'),\tilde r(e')=r'(e'),\tilde w(e')=w'(e')\quad(e'\in (E')^{1})
\end{align*}
is called the {\it replacement graph defined by $\phi$}.
\end{definition}
\begin{replacement lemma}\label{5.4}
Let $H\subseteq E^0$ be a hereditary subset, $(E',w')$ a weighted graph and $\phi:L_K(E_H,w_H)\rightarrow L_K(E',w')$ an isomorphism which maps vertices to sums of distinct vertices. Then $L_K(E,w)\cong L_K(\tilde E, \tilde w)$ where $(\tilde E, \tilde w)$ is the replacement graph defined by $\phi$.
\end{replacement lemma}
\begin{proof}
Clearly $(E',w')$ is a complete weighted subgraph of $(\tilde E, \tilde w)$. By Lemma \ref{lememb}, there is an algebra monomorphism $\psi: L_K(E',w')\rightarrow L_K(\tilde E, \tilde w)$. Define an algebra homomorphisms $f:L_K(E,w)\rightarrow L_K(\tilde E,\tilde w)$ by 
\begin{align*}
f(v)&=v\quad(v\in E^0\setminus H),\\
f(v)&=\psi(\phi(v))\quad(v\in H),\\
f(e_i)&=e_i,f(e_i^*)=e_i^*\quad(e\in E^{1}, s(e),r(e)\in E^0\setminus H, 1\leq i\leq w(e)),\\
f(e_i)&=\sum\limits_{j=1}^{n_{r(e)}} e^{(j)}_i,f(e_i^*)=\sum\limits_{j=1}^{n_{r(e)}}( e^{(j)}_i)^*\quad(e\in E^{1}, s(e)\in E^0\setminus H,r(e)\in H, 1\leq i\leq w(e)),\\
f(e_i)&=\psi(\phi(e_i)),f(e_i^*)=\psi(\phi(e^*_i))\quad(e\in E^{1}, s(e),r(e)\in H, 1\leq i\leq w(e))
\end{align*}
and an algebra homomorphism $g:L_K(\tilde E,\tilde w)\rightarrow L_K(E,w)$ by
\begin{align*}
g( v)&=v\quad(v\in E^0\setminus H),\\
g( v')&=\phi^{-1}(v')\quad(v'\in (E')^0),\\
g( e_i)&= e_i,g(e_i^*)= e^*_i\quad(e\in E^{1}, s(e),r(e)\in E^0\setminus H, 1\leq i\leq w(e)),\\
g( e_i^{(j)})&=e_i\phi^{-1}(u'_{r(e),j}),g(( e_i^{(j)})^*)=\phi^{-1}(u'_{r(e),j})e^*_i\quad(e\in E^{1}, s(e)\in E^0\setminus H,r(e)\in H,
\\&\hspace{8cm} 1\leq i\leq w(e)),1\leq j\leq n_{r(e)}),\\
g( e'_i)&=\phi^{-1}(e'_i),g(( e'_i)^*)=\phi^{-1}((e'_i)^*)\quad(e'\in (E')^{1}, 1\leq i\leq w'(e')).
\end{align*}
It follows from the universal properties of $L_K(E,w)$ and $L_K(\tilde E, \tilde w)$ that $f$ and $g$ are well defined. One checks easily that $f\circ g=id_{L_K(\tilde E, \tilde w)}$ and $g\circ f=id_{L_K(E,w)}$. Thus $L_K(E,w)\cong L_K(\tilde E, \tilde w)$.
\end{proof}
\begin{example}\label{ex5.2}
Suppose $(E,w)$ is the weighted graph 
\[
\xymatrix@C+15pt{ a \ar[r]^{k}& u& v\ar[l]_{e,2}\ar@/^1.7pc/[r]^{f}\ar@/_1.7pc/[r]_{g}& x\ar[r]^{h}& y& b\ar@/^1.7pc/[l]^{i^{(2)}}\ar@/_1.7pc/[l]_{i^{(1)}}& c\ar[l]_{j}}.
\]
Let $(E',w')$ be the unweighted graph
\[
\xymatrix@C+15pt{u_2&u_1\ar[l]_{\alpha^{(2)}}&v\ar[l]_{\alpha^{(1)}}\ar[r]^{\beta^{(11)}}&v_{12}\ar[r]^{\beta^{(12)}}&x\ar[r]^{\gamma^{(11)}}&u_{11}\ar[r]^{\gamma^{(12)}}&u_{12}\ar[r]^{h}&y}.
\]
Then, as mentioned in Example \ref{ex5.1}, $L_K(E_{T(v)}, w_{T(v)})\cong L_K(E',w')$. Let $\phi$ be the isomorphism defined in the proof of Lemma \ref{5.3}. Then the replacement graph defined by $\phi$ is the unweighted graph 
\[
(\tilde E, \tilde w):\quad\xymatrix@C+15pt{u_2&u_1\ar[l]_{\alpha^{(2)}}&v\ar[l]_{\alpha^{(1)}}\ar[r]^{\beta^{(11)}}&v_{12}\ar[r]^{\beta^{(12)}}&x\ar[r]^{\gamma^{(11)}}&u_{11}\ar[r]^{\gamma^{(12)}}&u_{12}\ar[r]^{h}&y\\
&&&a\ar[lllu]^{k^{(1)}}\ar[llu]_{k^{(2)}}\ar[rru]^{k^{(3)}}\ar[rrru]_{k^{(4)}}&&&&b\ar@/^1.7pc/[u]^{i^{(2)}}\ar@/_1.7pc/[u]_{i^{(1)}}\\
&&&&&&&c\ar[u]_{j}}.
\]
By the previous lemma we have $L_K(E,w)\cong L_K(\tilde E, \tilde w)$.
\end{example}

Now we are ready to prove the main result of this section.
\begin{FD theorem}\label{thmm2}
Let $K$ denote a field and $(E,w)$ a weighted graph. Then the following statements are equivalent.
\begin{enumerate}[(i)]
\item $L_K(E,w)$ is finite-dimensional.
\item $(E,w)$ is finite and aquasicyclic.
\item $L_K(E,w)\cong \prod\limits_{i=1}^{m}M_{n_i}(K)$ for some $m,n_1,\dots,n_m\in\N$.
\end{enumerate}
\begin{proof}
(i)$\Leftrightarrow$(ii). Holds by Lemma \ref{5.0}.\\
(ii)$\Rightarrow$(iii). Suppose that $(E,w)$ is finite and aquasicyclic (and hence $L_K(E,w)$ is finite-dimensional by Lemma \ref{5.0}). By Lemma \ref{5.2} we may assume that all weighted structured edges in $(E, w)$ are of type $B$ and their ranges are sinks. Consider the vertices in $E^0$ which emit weighted structured edges. It is easy to see that at least one of them, say $v$, has the property that $v$ is the only element of $T(v)$ which emits a weighted structured edge (otherwise there would be a cyclic path). By Lemma \ref{5.3} there is an unweighted graph $(E',w')$ such that $L_K(E_{T(v)}, w_{T(v)})\cong L_K(E',w')$ via an isomorphism $\phi$ which maps vertices to sums of distinct vertices. By the Replacement Lemma \ref{5.4}, $L_K(E,w)\cong L_K(\tilde E, \tilde w)$ where $(\tilde E, \tilde w)$ is the replacement graph defined by $\phi$. Clearly $(\tilde E, \tilde w)$ has one weighted structured edge less than $(E,w)$. We see that after a finite number of applications of Lemmas \ref{5.2}, \ref{5.3} and \ref{5.4} we arrive at an unweighted graph $(E'',w'')$ such that $L_K(E'',w'')\cong L_K(E,w)$. Since $L_K(E,w)$ is finite-dimensional, $L_K(E'',w'')$ is finite-dimensional. It follows from \cite[Theorem 2.6.17]{abrams-ara-molina} that $L_K(E,w)$ is isomorphic to a finite product of matrix rings over $K$.\\
(iii)$\Rightarrow$(i). Clear.
\end{proof}
\end{FD theorem}
\begin{example}
Suppose $(E,w)$ is the weighted graph 
\[
\xymatrix@C+15pt{ u& v\ar[l]_{e,2}\ar[r]^{f}& x}
\]
from Example \ref{ex4.0} and Example \ref{ex5.0}. Let $(E',w')$ be the unweighted graph
\[
\xymatrix@C+15pt{ u_2&u_1\ar[l]_{\alpha^{(2)}}&v\ar[l]_{\alpha^{(1)}}\ar[r]^{\beta^{(11)}}&x\ar[r]^{\gamma^{(11)}}&u_{11}}.
\]
Then $L_K(E, w)\cong L_K(E',w')$ as mentioned in Example \ref{ex5.0}. It follows from \cite[Theorem 2.6.17]{abrams-ara-molina} that $L_K(E,w)\cong M_3(K)\times M_3(K)$.
\end{example}
\begin{example}
Suppose $(E,w)$ is the weighted graph 
\[
\xymatrix@C+15pt{a&u\ar[l]_{k}&v\ar[l]_{e,2}\ar@/^1.7pc/[r]^{f}\ar@/_1.7pc/[r]_{g}&x\ar[r]^{h}&y\ar[r]^{i,2}&b\ar[r]^{j}&c}.
\]
from Example \ref{ex5.00}. Let $(\tilde E, \tilde w)$ be the unweighted graph
\[
(\tilde E, \tilde w):\quad\xymatrix@C+15pt{u_2&u_1\ar[l]_{\alpha^{(2)}}&v\ar[l]_{\alpha^{(1)}}\ar[r]^{\beta^{(11)}}&v_{12}\ar[r]^{\beta^{(12)}}&x\ar[r]^{\gamma^{(11)}}&u_{11}\ar[r]^{\gamma^{(12)}}&u_{12}\ar[r]^{h}&y\\
&&&a\ar[lllu]^{k^{(1)}}\ar[llu]_{k^{(2)}}\ar[rru]^{k^{(3)}}\ar[rrru]_{k^{(4)}}&&&&b\ar@/^1.7pc/[u]^{i^{(2)}}\ar@/_1.7pc/[u]_{i^{(1)}}\\
&&&&&&&c\ar[u]_{j}}.
\]
Then $L_K(E,w)\cong L_K(\tilde E, \tilde w)$ by Examples \ref{ex5.00} and \ref{ex5.2}. It follows from \cite[Theorem 2.6.17]{abrams-ara-molina} that $L_K(E,w)\cong M_5(K)\times M_{12}(K)$.
\end{example}

\end{document}